\numberwithin{equation}{section} 
\newcounter{mnote}
\theoremstyle{plain}
\newtheorem{theorem}{Theorem}[section]
\newtheorem{proposition}[theorem]{Proposition}
\newtheorem{lemma}[theorem]{Lemma}
\theoremstyle{definition}
\theoremstyle{remark}
\newtheorem{remark}[theorem]{Remark}
\newcommand{\field}[1]{\mathbb{#1}}
\newcommand{\nN}{\field{N}}
\newcommand{\nR}{\field{R}}
\newcommand{\cP}{\mathcal P}
\newcommand{\vphi}{\varphi}
\newcommand{\sand}{\quad\text{and}\quad}
\newcommand{\pd}[2]{\frac{\partial #1}{\partial #2}}
\newcommand{\od}[2]{\frac{d #1}{d #2}}
\newcommand{\abs}[1]{\left\lvert#1\right\rvert}
\newcommand{\norm}[1]{\left\lVert#1\right\rVert}
\newcommand{\set}[1]{\left\{#1\right\}}
\newcommand{\LpP}[1]{\text{$L^{#1}$}}
\newcommand{\HpP}[1]{\text{$H^{#1}$}}
\newcommand*{\magenta}[1]{{\color{black}{#1}}}
\begin{document}
\title[Data assimilation for the 2D B\'enard convection]{Continuous data assimilation for the 2D B\'enard convection through velocity measurements alone}

\date{\today}

%
\author{Aseel Farhat}
\address[Aseel Farhat]{Department of Mathematics\\
                Indiana University, Bloomington\\
        Bloomington , IN 47405, USA}
\email[Aseel Farhat]{afarhat@indiana.edu}
\author{Michael S. Jolly}
\address[Michael S. Jolly]{Department of Mathematics\\
                Indiana University, Bloomington\\
        Bloomington , IN 47405, USA}
\email[Michael S. Jolly]{msjolly@indiana.edu}
\author{Edriss S. Titi}
\address[Edriss S. Titi]{Department of Mathematics, Texas A\&M University, 3368 TAMU,
 College Station, TX 77843-3368, USA.  {\bf ALSO},
  Department of Computer Science and Applied Mathematics, Weizmann Institute
  of Science, Rehovot 76100, Israel.} \email{titi@math.tamu.edu and
  edriss.titi@weizmann.ac.il}


{\center To appear in Physica D: Nonlinear Phenomena\\ } 

\begin{abstract}
An algorithm for continuous data assimilation for the two-dimensional B\'enard convection problem is introduced and analyzed. It is inspired by the data assimilation algorithm developed for the Navier-Stokes equations, which allows for the implementation of variety of observables: low Fourier modes, nodal values, finite volume averages, and finite elements. The novelty here is that the observed data is obtained for the velocity field alone; i.e. no temperature measurements are needed for this algorithm. We provide conditions on the spatial resolution of the observed data, under the assumption that the observed data is free of noise, which are sufficient to show that the solution of the algorithm approaches, at an exponential rate, the unique exact unknown solution of the B\'enard convection problem associated with the observed (finite dimensional projection of) velocity.
\end{abstract}

 \maketitle
 {\bf MSC Subject Classifications:} 35Q30, 93C20, 37C50, 76B75, 34D06.

{\bf Keywords:} Continuous data assimilation, two-dimensional B\'enard convection problem, determining projections.\\
\section{Introduction}\label{intro}

Accurate numerical simulations of nonlinear systems require high precision in the initial data.
For most applications however, initial data which
should ideally be defined on the whole physical domain, can be measured only discretely, often with inadequate resolution. Data assimilation refers to the process of completing, or enhancing the resolution of the initial condition.
The classical method of continuous data assimilation, see, e.g., \cite{Daley}, is
to insert observational measurements directly into a model as the
latter is being integrated in time.
The natural mathematical target for data assimilation is the global attractor.
This set contains all the long time behavior; it is compact, invariant, and finite-dimensional. Another key notion
is that of determining parameters. A projection (onto say a finite number of low Fourier modes, or other types of interpolant projections based on nodal values and volume elements) is said to be determining if, whenever the projection of two trajectories on the global attractor approach each other, as $t \to \infty$, the full trajectories approach each other, see, for example,
\cite{C_O_T, F_M_R_T, Foias_Prodi, Foias_Temam, Foias_Temam_2, Foias_Titi, Holst_Titi, Jones_Titi, Jones_Titi_2} and references therein.  One way to exploit this is to insert low mode observables from a time series into the equation for the evolution of the high modes.  After a relatively short time interval $[t_{-1},t_0]$ the solution to the equation for the high modes is close to the high modes of the exact solution associated with the observables.
At that point the low modes and high modes can be combined to form a complete good approximation of the state of the system at time $t = t_0$, which can then be used as an initial condition for a high resolution simulation. This was the approach taken for the 2D Navier-Stokes in \cite{Browning_H_K, B_L_Stuart, Henshaw_Kreiss_Ystrom,Hayden_Olson_Titi,Olson_Titi_2003, Olson_Titi_2008, Korn}. Except \magenta{for} the work in \cite{B_L_Stuart} for the 3DVAR Gaussian filter, and \cite{Bessaih_Olson_Titi} using the determining parameters nudging approach of this paper for data assimilation, the previously mentioned theoretical work assumed that the observational measurements are error free. Notably, the authors of \cite{Hayden_Olson_Titi} present an algorithm for data assimilation that uses discrete in space and time measurements.

An alternative approach in \cite{Azouani_Titi} uses the observables in a feedback control term.
The advantage is that, since no derivatives are required
of the coarse grain observable, this works for a general class of interpolant operators. The main idea
can be outlined in terms of a general evolutionary equation
\begin{align}\label{ev_eq}
\od{u}{t} = F(u),
\end{align}
where the initial data $u(0)= u_0$ is missing. Let $I_h(u(t))$ represent an interpolant operator based on the spatial observations of the reference solution of system \eqref{ev_eq} at a coarse spatial resolution of size $h$.
Consider
\begin{subequations}\label{azouani}
\begin{align}
&\od{v}{t} = F(v) - \mu (I_h(v)- I_h(u)), \\
&v(0)= v_0,
\end{align}
\end{subequations}
where $\mu>0$ is a relaxation (nudging) parameter, and $v_0$ is arbitrary.
It is shown in \cite{Azouani_Olson_Titi} that if one takes $\mu$ large enough, and $h$
small enough (depending on $\mu$), then  $v(t)$ converges to the reference solution $u(t)$ of the two-dimensional Navier-Stokes equations, at an exponential rate,
as $t \to \infty$. An extension to this approach of \cite{Azouani_Olson_Titi} to the case where the observations are contaminated with random errors is studied in \cite{Bessaih_Olson_Titi}. The feedback control approach to data assimilation
plays a key role in the derivation in \cite{FJKT2}
of \magenta{a} determining form for the 2D NSE, which is an
ordinary differential equation 
whose steady states are precisely the trajectories in the global attractor.

The B\'enard convection problem is a model of the convection of an incompressible fluid layer in a box $(0, L) \times(0, 1)$ which is heated from below in such a way that the lower plate is maintained at a temperature $T_0$ while the upper one is at temperature $T_1<T_0$, where $T_0$ and $T_1$ are constants. 
After \magenta{a} change of variables (see \cite{Foias_Manley_Temam}), the non-dimensional two-dimensional Boussinesq equations that govern the velocity of the fluid, the pressure $p$, and the normalized temperature (or the density) of the fluid read 
\begin{subequations}\label{Bous}
\begin{align}
&\pd{u}{t} - \nu\Delta u + (u\cdot\nabla)u + \nabla p = \theta \mathbf{e}_2, \label{Bous1}\\
&\pd{\theta}{t} - \kappa\Delta\theta + (u\cdot\nabla)\theta - u\cdot{\bf e}_2=0, \label{Bous2}\\
&\nabla\cdot u= 0,\label{Bous_div}\\
&u(0;x) = u_0(x), \quad \theta(0;x)=\theta_0(x),\label{Bous_initial} 
\end{align}
with the boundary conditions in the $\magenta{x_2}$-direction
\begin{align}
u, \theta=0 \quad \text{at} \quad \magenta{x_2}=0 \quad \text{and} \quad \magenta{x_2}=1, \label{boundary1}
\end{align}
and in the $\magenta{x_1}$-direction, for simplicity, we will impose a periodicity condition
\begin{align}
& u, \theta, p \text{ are periodic, of period } L, \text{ in the }\magenta{x_1}\text{-direction}.\label{boundary3} 
\end{align}
\end{subequations}

The Boussinesq system \eqref{Bous} is usually referred to as the B\'enard convection problem.  The global regularity of the two-dimensional Boussinesq equations was established in \cite{Cannon_DiB_1980} (see also\cite{Temam_1997}) following the classical methods for the Navier-Stokes equations. The mathematical analysis of system \eqref{Bous} has been studied in \cite{Foias_Manley_Temam} (see also \cite{Temam_1997}), where the existence and uniqueness of weak solution in dimension two and three were proved, along with the existence of a finite dimensional global attractor was also established in space dimension two.  It was also shown in \cite{Foias_Manley_Temam} that system \eqref{Bous} can be handled with different boundary conditions. 
For recent results concerning the two-dimensional Boussinesq equations we refer the reader to \cite{Chae_2006}, \cite{Danchin_Paicu}, \cite{Hmidi_Keraani}, \cite{Hou_Li_2005}, \magenta{\cite{Hu_Kukavica_Ziane}}, \cite{Larios_Lunasin_Titi_2010}, and references therein.

In this work, we present a new continuous data assimilation algorithm for the B\'enard convection problem \eqref{Bous}. 
The twist here is that we can recover the reference solution to \eqref{Bous} using coarse-grain
data for the velocity alone; {\it temperature data is not needed}.  This is done by solving
\begin{subequations}\label{DA_Bous}
\begin{align}
&\pd{v}{t} -\nu \Delta v + (v\cdot\nabla)v +\nabla \tilde p = \eta\mathbf{e}_2- \mu(I_h(v)-I_h(u)), \\
&\pd{\eta}{t} -\kappa\Delta\eta + (v\cdot\nabla)\eta - v\cdot {\bf e}_2= 0, \\
&\nabla \cdot v = 0, \\
&v(0;x) = v_0(x), \quad \eta(0; x) = \eta_0(x), \label{DA_Bous_initial}
\end{align}
with the boundary conditions
\begin{align}\label{boundary}
&\eqref{boundary1}, \text{ and }\eqref{boundary3}\text{ hold with }u, \theta, \text{ and } p\text{ replaced by }v, \eta, \text{ and }{\tilde p},\notag \\
&\text{respectively}.
\end{align}
\end{subequations}

Here, $\tilde p$ is a modified pressure, and as for \eqref{azouani}, $v_0, \eta_0$ may \magenta{be} chosen arbitrarily, e.g., zero
in each case.  If we knew $u_0$ and $\theta_0$ in \eqref{Bous_initial}, then we could take $v_0=u_0$ and $\eta_0=\theta_0$ in \eqref{DA_Bous_initial} and the solution $(v,\eta)$ would be identically $(u, \theta)$, by the uniqueness of solutions of system \eqref{DA_Bous}, which will be shown below. The point, again, is that in many applications, we do not
know $u_0$ and $\theta_0$.  We emphasize that in this algorithm, we construct our approximate solutions $(v,\eta)$ using only the observations of the velocity field solution, $I_h(u)$, in the $v$-equation; no observations $I_h(\theta)$ are needed for the temperature.

We will consider two types of interpolant observables.
One is to be given by a linear interpolant operator $I_h: \HpP{1} \rightarrow \LpP{2}$ satisfying
 the approximation property
\begin{align}\label{app}
\norm{\varphi - I_h(\varphi)}_{\LpP{2}} \leq \gamma_0h\norm{\varphi}_{\HpP{1}}, 
\end{align}
for every $\varphi \in \HpP{1}$, where $\gamma_0>0$ is a dimensionless constant.
The other type is given by $I_h: \HpP{2}\rightarrow\LpP{2}$, together with
\begin{align}\label{app2}
\norm{\varphi - I_h(\varphi)}_{\LpP{2}} \leq \gamma_1h\norm{\varphi}_{\HpP{1}} + \gamma_2h^2\norm{\varphi}_{\HpP{2}},
\end{align}
for every $\varphi \in \HpP{2}$, where $\gamma_1, \gamma_2>0$ are dimensionless constants.
One example of an interpolant observable that satisfies \eqref{app} is the orthogonal projection onto the low Fourier modes with wave numbers $k$ such that $|k|\leq 1/h$. A more physical example is the volume elements that were studied in \cite{Jones_Titi}. An example of an interpolant observable that satisfies \eqref{app2} is given by the measurements at a discrete set of nodal points in $\Omega$ (see Appendix A in \cite{Azouani_Olson_Titi}).
%


In the next section we lay out the functional setting commonly used in the mathematical study of the Navier-Stokes equations.  We also recall \magenta{the} previous work on the B\'enard problem establishing well-posedness and existence of a global attractor.  In section \ref{conv} we prove that solutions on the global attractor of \eqref{Bous}
are determined by the velocity alone, a fact which motivates our data assimilation algorithm \eqref{DA_Bous}.  The main results are in section \ref{conv}. Assuming adequate resolution in the observational data, and separately
conditions \eqref{app} and \eqref{app2}, we prove the well-posedness of system \eqref{DA_Bous}
as well as convergence (at an exponential rate) of the approximate solution $(v,\eta)$ of \eqref{DA_Bous}  to the reference solution $(u,\theta)$ \magenta{of} the B\'enard convection problem \eqref{Bous}.

\bigskip
\section{Preliminaries}\label{pre}

For the sake of completeness, this section presents some preliminary material and notation commonly used in the mathematical study of fluids, in particular in the study of the Navier-Stokes equations (NSE) and the Euler equations. For more detailed discussion on these topics, we refer the reader to \cite{Constantin_Foias_1988}, \cite{Robinson}, \cite{Temam_1995_Fun_Anal} and \cite{Temam_2001_Th_Num}.

We begin by defining function spaces corresponding to the relevant physical boundary conditions. We define $\mathcal{F}$ to be the set of $C^\infty(\Omega)$ functions defined in $\Omega$, which are trigonometric polynomials in $\magenta{x_1}$ with period $L$, and compactly supported in the $\magenta{x_2}$-direction. We denote the space of smooth vector-valued functions which incorporates the divergence-free condition by
\[\mathcal{V}:=\set{\phi\in\mathcal{F}\times\mathcal{F}: \; \nabla\cdot\phi=0}.\]

\begin{remark}
We will use the same notation indiscriminately for both scalar and vector Lebesgue and Sobolev spaces, which should not be a source of confusion.
\end{remark}

The closures of $\mathcal{V}$ and $\mathcal{F}$ in $L^2(\Omega)$ will be denoted by $H_0$ and $H_1$, respectively. $H_0$ and $H_1$ will be endowed  \magenta{with} the usual scalar product
\[(u,v)_{H_0}=\sum_{i=1}^2\int_{\Omega} u^i(x)v^i(x)\,dx
\sand
(\psi,\phi)_{H_1}=\int_{\Omega} \psi(x)\phi(x)\,dx, \]
and the associated norms $\norm{u}_{H_0} = (u,u)_{H_0}^{1/2}$ and $\norm{\phi}_{H_1} = (\phi,\phi)_{H_1}^{1/2}$, respectively. We denote by the closures of $\mathcal{V}$ and $\mathcal{F}$ in $H^1(\Omega)$ by $V_0$ and $V_1$, respectively. $V_0$ and $V_1$ are Hilbert spaces endowed by the scalar product
\[((u,v))_{V_0}=\sum_{i,j=1}^2\int_{\Omega}\partial_ju^i(x)\partial_jv^i(x)\,dx
\sand
((\psi,\phi))_{V_1}=\sum_{j=1}^2\int_{\Omega}\partial_j\psi(x)\partial_j\phi(x)\,dx, \]
and the associated norms $\norm{u}_{V_0} = ((u,u))_{V_0}^{1/2}$ and $\norm{\phi}_{V_1} = ((\phi,\phi))_{V_1}^{1/2}$, respectively.

Let $D(A_0)= V_0\cap H^2(\Omega)$ and $D(A_1)= V_1\cap H^2(\Omega)$ and let $A_i: D(A_i) \rightarrow H_i$ be the unbounded linear operator defined by
\[ (A_i u, v)_{H_i} = ((u,v))_{V_i}, \qquad i = 0,1, \]
for all $u, v \in D(A_i)$. The operator $A_i$ is self-adjoint and $A_i^{-1}$ is a compact , positive-definite, self-adjoint linear operator in $H_i$, for each $i=0,1$. Thus, there exists a complete orthonormal set of eigenfunctions $w_j^i$ in $H_i$ such that $A_iw_j^i= \lambda_j^iw_j^i$ where $0<\lambda_j^i\leq\lambda_{j+1}^i$ for $j\in \nN$ and each $i=0,1$.


We denote the Helmholtz-Leray projector from $L^2(\Omega)$ onto $H_0$ by $\cP_\sigma$ and the dual of $V_i$ by $V_i^{'}$, for $i=0,1$. We define a map $B_0:V_0\times V_0 \rightarrow V_0^{'}$ by
$$\langle B_0(u,v), w\rangle_{V_0, V_0^{'}} = (((u\cdot\nabla)v),w)_{H_0}, $$
for each $u, v,w \in V_0$,
and its scalar analogue
$B_1:V_0\times V_1 \rightarrow V_1^{'}$ by
$$\langle B_1(u,v), w\rangle_{V_1, V_1^{'}} = (((u\cdot\nabla)v),w)_{H_1}, $$
for each $u\in V_0$ and $v,w \in V_1$.
These bilinear operators have the algebraic property
\begin{subequations}\label{prop1}
\begin{align}
\langle B_0(u,v), w\rangle_{V_0,V_0^{'}} = - \langle B_0(u,w),v\rangle_{V_0,V_0^{'}},\end{align}
and
\begin{align}
\langle B_1(u,\theta),\phi\rangle_{V_1,V_1^{'}} = - \langle B_1(u,\phi),\theta\rangle_{V_1,V_1^{'}},
\end{align}
\end{subequations}
for each $u \in V_0$ and $v, w \in V_0$ and $\theta, \phi \in V_1$. Consequently, they also enjoy
the orthogonal property
\begin{align}\label{orth}
\langle \magenta{B_0}(u,v), v\rangle_{V_0,V_0^{'}} = 0, \qquad \text{and} \qquad \langle B_1(u,\theta), \theta\rangle_{V_1,V_1^{'}} = 0,
\end{align}
for each $u\in V_0$, $v\in V_0$ and $\theta\in V_1$.

In the above notation, we write the incompressible two-dimensional B\'enard convection problem \eqref{Bous} in the functional form
\begin{subequations}\label{Bous_fun}
\begin{align}
&\od{u}{t} + \nu A_0u + B_0(u,u) = \cP_\sigma (\theta {\bf e}_2), \label{Bous_fun_1} \\
& \od{\theta}{t} + \kappa A_1 \theta + B_1(u,\theta) - u \cdot{\bf e}_2= 0, \label{Bous_fun_2}\\
&u(0;x) = u_0(x), \quad \theta(0;x) = \theta_0(x).
\end{align}
\end{subequations}


Next, we recall Ladyzhenskaya's inequality for an integrable function $\vphi \in V_i$, $i=0, 1$:
\begin{equation}\label{L4_to_H1}
\|\vphi\|_{\LpP{4}}^2\leq c_1 \|\vphi\|_{\LpP{2}}\norm{\vphi}_{V_i},
\end{equation}
where $c_1$ is a universal, dimensionless, positive constant.
Hereafter, $c$ denotes a generic constant which may change from line to line. We also have the Poincar\'e inequality:
\begin{subequations}\label{poincare}
\begin{align}
\|\vphi\|_{\LpP{2}}^2\leq \lambda_1^{-1}\|\vphi\|_{V_i}^2, &\quad \text{ for all } \varphi\in {V_i}, \\
\|\vphi\|_{V_i}^2\leq \lambda_1^{-1}\|A_i\vphi\|_{\LpP{2}}^2, &\quad \text{ for all } \varphi\in \mathcal{D}(A_i),
\end{align}
\end{subequations}
where $\lambda_1$ is the minimum of the two smallest eigenvalues of the Stokes operators $A_i$, $i=0,1$.

Furthermore, inequalities \eqref{app} and \eqref{app2} imply that
\begin{align}\label{app_F}
\norm{\cP_\sigma(w-I_h(w))}_{H_0}^2\leq c_0^2h^2\norm{w}_{{V_0}}^2,
\end{align}
for every $w\in V_0$, where $c_0=\gamma_0$, and respectively,
\begin{align}\label{app2_F}
\norm{\cP_\sigma(w-I_h(w))}_{{H_0}}^2\leq \frac{1}{2}c_0^2h^2\norm{w}_{{V_0}}^2+ \frac14c_0^4h^4\norm{A_0w}_{{H_0}}^2,
\end{align}
for every $w\in \mathcal{D}(A_0)$, for some $c_0>0$ that depends only on $\gamma_1$ and $\gamma_2$.

We will apply the following inequality which is a particular case of a more general inequality proved in \cite{Jones_Titi}.
\begin{lemma}\label{gen_gron_2}\cite{Jones_Titi} Let $\tau>0$ be fixed. Suppose that $Y(t)$ is an absolutely continuous function which is locally integrable and that it satisfies the following:
\begin{align*}
\od{Y}{t} + \alpha(t)Y \leq 0,\qquad \text{ a.e. on } (0,\infty),
\end{align*}
and
\begin{align*}
\liminf_{t\rightarrow\infty} \int_t^{t+\tau} \alpha(s)\,ds \geq \gamma, \qquad 
\limsup_{t\rightarrow\infty} \int_t^{t+\tau}  \alpha^{-}(s)\,ds < \infty,
\end{align*}
for some $\gamma>0$, where $\alpha^{-} = \max\{\magenta{-\alpha}, 0\}$.
Then, $Y(t)\rightarrow 0$ at an exponential rate, as $t\rightarrow \infty$.
\end{lemma}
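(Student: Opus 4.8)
The plan is to convert the differential inequality into an explicit exponential bound via an integrating factor, and then to show that the integral of $\alpha$ accumulated from a fixed large time grows at least linearly in $t$. Throughout I would use that $Y\ge 0$ — which holds in every application of the lemma in this paper — so that an exponentially decaying upper bound for $Y$ indeed forces $Y\to 0$.

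First I would multiply $\od{Y}{t}+\alpha(t)Y\le 0$ by the integrating factor $\exp\!\big(\int_0^t\alpha(s)\,ds\big)$; since $\alpha$ is locally integrable this factor, and hence $t\mapsto Y(t)\exp\!\big(\int_0^t\alpha(s)\,ds\big)$, is absolutely continuous on bounded intervals, with a.e.\ non-positive derivative. Thus this product is non-increasing, and integrating from a time $t_0$ to $t\ge t_0$ gives the pointwise estimate $0\le Y(t)\le Y(t_0)\,\exp\!\big(-\int_{t_0}^t\alpha(s)\,ds\big)$.

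The second, and main, step is to bound $\int_{t_0}^t\alpha(s)\,ds$ from below. Using the first hypothesis I would choose $T>0$ so that $\int_s^{s+\tau}\alpha(r)\,dr\ge \gamma/2$ for all $s\ge T$, and, enlarging $T$ if necessary, so that also $\int_s^{s+\tau}\alpha^-(r)\,dr\le M$ for all $s\ge T$ and some finite $M$ (this uses the second hypothesis). For $t\ge T$ I would write $t=T+n\tau+\rho$ with $n=\lfloor(t-T)/\tau\rfloor$ and $0\le\rho<\tau$, split $[T,t]$ into the $n$ consecutive windows $[T+k\tau,T+(k+1)\tau]$ plus the leftover $[T+n\tau,t]$, add up the lower bounds $\gamma/2$ over the full windows, and bound the leftover contribution below by $-\int_{T+n\tau}^{T+(n+1)\tau}\alpha^-(r)\,dr\ge -M$. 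This gives $\int_T^t\alpha(s)\,ds\ge \frac{\gamma}{2\tau}(t-T)-\frac{\gamma}{2}-M$, and substituting into the estimate of the previous paragraph with $t_0=T$ yields $0\le Y(t)\le Y(T)\,e^{\gamma/2+M}\,e^{-\gamma(t-T)/(2\tau)}$ for all $t\ge T$, i.e.\ exponential decay at rate $\gamma/(2\tau)$.

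I expect the only genuine obstacle to be the leftover-window estimate: because $\alpha$ is not assumed to have a fixed sign, one cannot simply discard the integral over $[T+n\tau,t]$, and it is exactly here that the hypothesis $\limsup_{t\to\infty}\int_t^{t+\tau}\alpha^-(s)\,ds<\infty$ enters, keeping the correction $M$ uniform in $n$. The integrating-factor step and the window-counting are otherwise routine.
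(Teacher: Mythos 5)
Your proof is correct and follows the standard argument for this lemma (integrating factor, then splitting $[T,t]$ into windows of length $\tau$ and using $\alpha\ge-\alpha^{-}$ on the leftover piece); the paper itself gives no proof, only the citation to Jones--Titi, and your argument matches the one given there. Your explicit caveat that one needs $Y\ge 0$ for the exponential upper bound to force $Y\to 0$ is well taken, since the statement as printed omits this hypothesis even though it holds in every application in the paper.
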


We recall the following existence and uniqueness results from \cite{Foias_Manley_Temam, Temam_1997} for the B\'enard convection problem \eqref{Bous_fun}.

\begin{theorem}[Existence and Uniqueness of Weak Solutions]
Let $T>0$ be fixed, but arbitrary. Let $\nu>0$ and $\kappa>0$. If $u_0\in H_0$ and $\theta_0\in {H_1}$, then system \eqref{Bous_fun} has a unique weak solution $(u, \theta)$ such that $u\in C([0,T];H_0) \cap L^2([0,T];V_0)$ and $\theta\in C([0,T];{H_1})$.
\end{theorem}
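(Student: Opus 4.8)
The plan is to prove existence by the classical Galerkin method used for the 2D Navier--Stokes equations, treating the two linear coupling terms $\cP_\sigma(\theta{\bf e}_2)$ and $u\cdot{\bf e}_2$ perturbatively, and to prove uniqueness by a Gr\"onwall estimate on the difference of two solutions. First I would fix the orthonormal bases $\{w_j^0\}$, $\{w_j^1\}$ of eigenfunctions of $A_0$ in $H_0$ and $A_1$ in $H_1$, let $P_m^0$, $P_m^1$ be the orthogonal projections onto the first $m$ modes, and solve the finite-dimensional Galerkin system for $(u_m,\theta_m)$ with initial data $(P_m^0u_0,P_m^1\theta_0)$; local-in-time existence of the approximants follows from the Picard--Lindel\"of theorem, and the a priori bounds below make them global on $[0,T]$.

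The core of the existence argument is the basic energy estimate. Pairing the projected velocity equation with $u_m$ and using the orthogonality \eqref{orth} to discard $\langle B_0(u_m,u_m),u_m\rangle$ gives
\begin{align*}
\frac12\od{}{t}\norm{u_m}_{H_0}^2 + \nu\norm{u_m}_{V_0}^2 = (\theta_m{\bf e}_2,u_m)_{H_0}\le\norm{\theta_m}_{H_1}\norm{u_m}_{H_0},
\end{align*}
and pairing the temperature equation with $\theta_m$ and using \eqref{orth} to discard $\langle B_1(u_m,\theta_m),\theta_m\rangle$ gives
\begin{align*}
\frac12\od{}{t}\norm{\theta_m}_{H_1}^2 + \kappa\norm{\theta_m}_{V_1}^2 = (u_m\cdot{\bf e}_2,\theta_m)_{H_1}\le\norm{u_m}_{H_0}\norm{\theta_m}_{H_1}.
\end{align*}
Adding, using $2ab\le a^2+b^2$, and applying Gr\"onwall's inequality produces, for each fixed $T$, a bound on $\sup_{[0,T]}(\norm{u_m}_{H_0}^2+\norm{\theta_m}_{H_1}^2)$ uniform in $m$, and then, integrating the energy inequality in time, uniform bounds for $u_m$ in $L^2([0,T];V_0)$ and $\theta_m$ in $L^2([0,T];V_1)$. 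Bounding $B_0,B_1$ by Ladyzhenskaya's inequality \eqref{L4_to_H1} then gives uniform bounds for $\od{u_m}{t}$ in $L^2([0,T];V_0')$ and $\od{\theta_m}{t}$ in $L^2([0,T];V_1')$. Along a subsequence, Banach--Alaoglu yields weak-$*$ limits $(u,\theta)$ in the energy spaces and Aubin--Lions--Simon yields strong convergence $u_m\to u$ in $L^2([0,T];H_0)$ and $\theta_m\to\theta$ in $L^2([0,T];H_1)$; this strong convergence is exactly what is needed to identify the weak limits of the quadratic terms $B_0(u_m,u_m)$ and $B_1(u_m,\theta_m)$, while the linear coupling terms converge trivially. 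Thus $(u,\theta)$ is a weak solution, and $u\in C([0,T];H_0)$, $\theta\in C([0,T];H_1)$ follow from the standard Lions--Magenes lemma once $\od{u}{t}\in L^2([0,T];V_0')$ and $\od{\theta}{t}\in L^2([0,T];V_1')$ are known.

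For uniqueness I would take two weak solutions $(u^{(1)},\theta^{(1)})$, $(u^{(2)},\theta^{(2)})$, set $w=u^{(1)}-u^{(2)}$, $\rho=\theta^{(1)}-\theta^{(2)}$, and use bilinearity to write $B_0(u^{(1)},u^{(1)})-B_0(u^{(2)},u^{(2)})=B_0(w,u^{(1)})+B_0(u^{(2)},w)$ and $B_1(u^{(1)},\theta^{(1)})-B_1(u^{(2)},\theta^{(2)})=B_1(w,\theta^{(1)})+B_1(u^{(2)},\rho)$. Pairing the difference equations with $(w,\rho)$ and invoking \eqref{orth} to kill $\langle B_0(u^{(2)},w),w\rangle$ and $\langle B_1(u^{(2)},\rho),\rho\rangle$ leaves
\begin{align*}
\frac12\od{}{t}\pnt{\norm{w}_{H_0}^2+\norm{\rho}_{H_1}^2}+\nu\norm{w}_{V_0}^2+\kappa\norm{\rho}_{V_1}^2 = -\langle B_0(w,u^{(1)}),w\rangle - \langle B_1(w,\theta^{(1)}),\rho\rangle + (\rho{\bf e}_2,w)_{H_0}+(w\cdot{\bf e}_2,\rho)_{H_1}.
\end{align*}
The main point — routine in two dimensions, but the only genuine work — is to control the two trilinear terms. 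By \eqref{L4_to_H1} and Young's inequality, $|\langle B_0(w,u^{(1)}),w\rangle|\le\frac{\nu}{4}\norm{w}_{V_0}^2+c\norm{u^{(1)}}_{V_0}^2\norm{w}_{H_0}^2$; and, rewriting $\langle B_1(w,\theta^{(1)}),\rho\rangle=-\langle B_1(w,\rho),\theta^{(1)}\rangle$ via \eqref{prop1} and applying \eqref{L4_to_H1} to both $w$ and $\theta^{(1)}$, $|\langle B_1(w,\theta^{(1)}),\rho\rangle|\le\frac{\nu}{4}\norm{w}_{V_0}^2+\frac{\kappa}{2}\norm{\rho}_{V_1}^2+c\norm{\theta^{(1)}}_{H_1}^2\norm{\theta^{(1)}}_{V_1}^2\norm{w}_{H_0}^2$, while the coupling terms are bounded by $\norm{w}_{H_0}^2+\norm{\rho}_{H_1}^2$. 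Absorbing the viscous terms on the left and writing $X=\norm{w}_{H_0}^2+\norm{\rho}_{H_1}^2$ gives $\od{X}{t}\le g(t)X$ with $g\in L^1([0,T])$ (since $\norm{u^{(1)}}_{V_0}^2,\norm{\theta^{(1)}}_{V_1}^2$ are integrable and $\norm{\theta^{(1)}}_{H_1}$ is bounded); as $X(0)=0$, Gr\"onwall forces $X\equiv0$, i.e.\ the solutions coincide. The only slightly delicate point overall is the interplay between the regularity that the uniqueness estimate requires — namely $\theta^{(1)}\in L^2([0,T];V_1)$ — and what the Galerkin scheme actually produces; since the constructed solution does carry this parabolic regularity, uniqueness holds in that class, and everything else is a direct transcription of the 2D Navier--Stokes arguments.
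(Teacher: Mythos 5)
Your proposal is correct; note that the paper does not prove this theorem at all but simply recalls it from \cite{Foias_Manley_Temam, Temam_1997}, and your Galerkin--Aubin--Lions existence argument together with the Gr\"onwall uniqueness estimate is precisely the standard proof carried out in those references. The one delicate point --- that the uniqueness estimate needs $\theta\in L^2([0,T];V_1)$, which the energy inequality supplies for $\kappa>0$ even though the theorem statement only records $\theta\in C([0,T];H_1)$ --- is one you already identify and resolve correctly.
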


It was also shown in \cite{Foias_Manley_Temam, Temam_1997} that  the 2D B\'enard convection system has a finite-dimensional global attractor.

\begin{theorem}[Existence of a Global Attractor]\label{global_attractor_Bous} Let $T>0$ be fixed, but arbitrary. If the initial data $u_0\in V_0$ and $\theta_0\in{V_1}$, then the system \eqref{Bous_fun} has a unique strong solution $(u,\theta)$ that satisfies $u\in C([0,T];V_0)\cap L^2([0,T];\mathcal{D}(A_0))$ and $\theta\in C([0,T];{V_1})\cap L^2([0,T];\mathcal{D}(A_1))$. Moreover, the system induced by \eqref{Bous_fun} is well-posed and possesses a finite-dimensional global attractor $\mathcal{A}$ which is maximal among all the bounded invariant sets and is compact in $H_0\times {H_1}$.
\end{theorem}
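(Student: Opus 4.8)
The plan is to run the classical Galerkin/energy method for two-dimensional incompressible flow, adapted to the coupled system \eqref{Bous_fun}. For each $m\in\nN$, project \eqref{Bous_fun_1}--\eqref{Bous_fun_2} onto $\mathrm{span}\{w_1^0,\dots,w_m^0\}$ and $\mathrm{span}\{w_1^1,\dots,w_m^1\}$; the resulting quadratic system of ODEs has a unique local solution $(u_m,\theta_m)$, which the a priori bounds below show is global. One then extracts a subsequential limit — weak-$\ast$ in $L^\infty([0,T];H_0\times H_1)$, weak in $L^2([0,T];V_0\times V_1)$, and strong in $L^2([0,T];H_0\times H_1)$ by the Aubin--Lions lemma (using uniform bounds on $\od{u_m}{t},\od{\theta_m}{t}$ in $L^2([0,T];V_0'\times V_1')$) — and the strong $L^2_tL^2$ convergence is exactly what is needed to pass to the limit in the bilinear terms $B_0(u_m,u_m)$, $B_1(u_m,\theta_m)$. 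The higher estimates below then upgrade the weak solution already provided by the theorem recalled above to a strong one.

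The a priori estimates come in two tiers. First, taking the $H_0$- and $H_1$-inner products of \eqref{Bous_fun_1} and \eqref{Bous_fun_2} with $u$ and $\theta$, the trilinear terms drop by the orthogonality \eqref{orth}, and the buoyancy and forcing terms combine into $2\int_\Omega\theta\,u^{(2)}$ (with $u^{(2)}$ the vertical component); bounding this by Cauchy--Schwarz, the Poincar\'e inequality \eqref{poincare}, and Young's inequality yields a bound on $\norm{u(t)}_{H_0}^2+\norm{\theta(t)}_{H_1}^2$ on every $[0,T]$ together with $\int_0^T(\nu\norm{u}_{V_0}^2+\kappa\norm{\theta}_{V_1}^2)\,dt<\infty$. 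Second, taking the $H_0$- and $H_1$-inner products with $A_0u$ and $A_1\theta$: in two dimensions the inertial terms obey, by Ladyzhenskaya's inequality \eqref{L4_to_H1} and interpolation, $\abs{\langle B_0(u,u),A_0u\rangle}\le c\,\norm{u}_{H_0}^{1/2}\norm{u}_{V_0}\norm{A_0u}_{H_0}^{3/2}$ and an analogous bound for $B_1$, so after Young's inequality and absorbing $\norm{A_0u}_{H_0}^2$, $\norm{A_1\theta}_{H_1}^2$ one obtains
$$\od{}{t}\left(\norm{u}_{V_0}^2+\norm{\theta}_{V_1}^2\right)+\nu\norm{A_0u}_{H_0}^2+\kappa\norm{A_1\theta}_{H_1}^2\le g(t)\left(\norm{u}_{V_0}^2+\norm{\theta}_{V_1}^2\right)+h(t),$$
with $g,h\in L^1_{\mathrm{loc}}$ built from the already-controlled quantities $\norm{u}_{H_0}^2$ and $\norm{u}_{V_0}^2$. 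Gr\"onwall's inequality (from $V$-regular data), or the uniform Gr\"onwall lemma (from $H$-data, using the unit-interval integrals from the first tier), then gives $u\in C([0,T];V_0)\cap L^2([0,T];\mathcal{D}(A_0))$ and $\theta\in C([0,T];V_1)\cap L^2([0,T];\mathcal{D}(A_1))$.

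For uniqueness and well-posedness I would take the difference $(w,\rho)=(u_1-u_2,\theta_1-\theta_2)$ of two strong solutions, test with $w$ and $\rho$, and use bilinearity: the only non-cancelling inertial terms, $\langle B_0(w,u_2),w\rangle$ and $\langle B_1(w,\theta_2),\rho\rangle$, are bounded via \eqref{L4_to_H1} by $c\,\norm{w}_{H_0}\norm{w}_{V_0}\norm{u_2}_{V_0}$ and similarly, while the coupling terms combine into $2\int_\Omega\rho\,w^{(2)}$; absorbing into the dissipation gives $\od{}{t}\bigl(\norm{w}_{H_0}^2+\norm{\rho}_{H_1}^2\bigr)\le\Phi(t)\bigl(\norm{w}_{H_0}^2+\norm{\rho}_{H_1}^2\bigr)$ with $\Phi\in L^1_{\mathrm{loc}}$, so Gr\"onwall forces $(w,\rho)\equiv(0,0)$, and the same computation with distinct data yields Lipschitz dependence on the initial data in $H_0\times H_1$ (and on bounded sets of $V_0\times V_1$). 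Thus \eqref{Bous_fun} generates a continuous semigroup $S(t)$ on $V_0\times V_1$. For the global attractor I would then establish: (i) a bounded absorbing ball in $H_0\times H_1$; (ii) a bounded absorbing ball in $V_0\times V_1$, by feeding (i) and the unit-interval $L^2_tV$ bound into the uniform Gr\"onwall lemma applied to the second-tier inequality; (iii) asymptotic compactness, which is immediate since $S(t)$ carries bounded sets into bounded subsets of $V_0\times V_1$ and each embedding $V_i\hookrightarrow H_i$ is compact. Then $\mathcal{A}=\omega(\mathcal{B})$ for an absorbing ball $\mathcal{B}$ is the global attractor, maximal among bounded invariant sets and compact in $H_0\times H_1$, by the standard theory (cf.\ \cite{Temam_2001_Th_Num, Constantin_Foias_1988}). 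Finite-dimensionality follows from the volume-contraction/trace estimate: linearizing \eqref{Bous_fun} along a trajectory on $\mathcal{A}$ and estimating $\mathrm{Tr}(P_n\mathcal{L}(t)P_n)$ via the Lieb--Thirring inequality as for the 2D Navier--Stokes equations (the temperature block contributing an entirely analogous term) shows this trace is eventually negative for $n$ large, which bounds the Hausdorff and fractal dimensions of $\mathcal{A}$.

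The step I expect to be the real obstacle is closing the first-tier estimate into a genuine absorbing ball in $H_0\times H_1$: the coupling term $2\int_\Omega\theta\,u^{(2)}$, handled by Cauchy--Schwarz and Poincar\'e alone, is not dominated by the dissipation for all $\nu,\kappa$ — precisely in the physically interesting, large-Rayleigh regime. The remedy is to exploit the extra structure of the B\'enard problem, namely the maximum principle for the original (pre-change-of-variables) temperature, which furnishes a uniform-in-time $L^\infty$ bound on $\theta$ and thereby turns the $u$-momentum equation into one with uniformly bounded forcing; this is the route taken in \cite{Foias_Manley_Temam} (see also \cite{Temam_1997}). Everything else reduces to the routine two-dimensional fluid estimates.
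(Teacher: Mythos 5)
Your proposal is correct and follows essentially the same route as the paper's source for this statement: the paper does not prove Theorem \ref{global_attractor_Bous} but recalls it from \cite{Foias_Manley_Temam, Temam_1997}, and your sketch is precisely that classical argument (Galerkin plus two-tier energy estimates, uniqueness by testing the difference, absorbing balls, compactness, and the trace/Lieb--Thirring dimension bound). In particular you correctly identify the one genuinely non-routine point --- that the coupling term $2\int_\Omega \theta\, u^{(2)}$ cannot be absorbed by the dissipation for all parameter values and must be handled via the maximum principle for the pre-transformed temperature, which is exactly how \cite{Foias_Manley_Temam} closes the $H_0\times H_1$ absorbing-ball estimate (and is reflected in the bound $\norm{\theta(t)}_{H_1}\leq 2L^{1/2}$ of Proposition \ref{unif_bounds}).
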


We will use the following bounds on $(u,\theta)$ later in our analysis. 
%
%
\begin{proposition}[Uniform Bounds on the solutions]\label{unif_bounds}\cite{Foias_Manley_Temam, Temam_1997}
Let $(u,\theta)$ be a strong solution of \eqref{Bous_fun}. There exists $t_0>0$, which depends on norms of the initial data, such that for $t\geq t_0$,
\begin{align}
\norm{\theta(t)}_{{H_1}} &\leq 2L^{1/2}, \quad \text{and}\quad
\norm{u(t)}_{{H_0}}  \leq \frac{2L^{1/2}}{\nu\lambda_1^{1/2}}, 
\end{align}
\begin{align}
&\int_t^{t+1}\norm{u(s)}_{{V_0}}^2\,ds \leq a_3, \quad
\int_t^{t+1}\norm{\theta(s)}_{{V_1}}^2\,ds \leq b_3,
\end{align}
\begin{align}
\norm{u(t)}_{{V_0}}^2 &\leq \left(a_2+a_3\right) e^{a_1} =: J_0, \label{J_0_eps}\\
\norm{\theta(t)}_{{V_1}}^2 & \leq (b_2+ b_3)e^{b_1} = : J_1, \label{J_1_eps}
\end{align}
where 
\begin{align*}
a_2 = b_2 = \frac{cL}{\nu\lambda_1^{1/2}}, \quad
a_3 = \frac{cL(1+\lambda_1^{1/2})}{\nu^2\lambda_1}, \quad b_3= \frac{cL(1+\nu\lambda_1^{1/2})}{\kappa\nu\lambda_1^{1/2}}, 
\end{align*}
and
\begin{align*}
a_1 = \frac{cL}{\nu^5\lambda_1}a_3, \quad b_1 =\frac{cL}{\kappa^3\nu^2\lambda_1}a_3, 
\end{align*}
for some dimensionless positive constant $c$.
\end{proposition}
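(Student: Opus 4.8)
The plan is to reproduce the classical \emph{a priori} energy estimates for the B\'enard system (as in \cite{Foias_Manley_Temam, Temam_1997}) in four stages: (i) a uniform-in-time bound on $\norm{\theta}_{H_1}$; (ii) a uniform-in-time bound on $\norm{u}_{H_0}$, deduced from (i) by Gronwall; (iii) bounds on $\int_t^{t+1}\norm{u}_{V_0}^2\,ds$ and $\int_t^{t+1}\norm{\theta}_{V_1}^2\,ds$, obtained by integrating the basic energy identities; and (iv) the uniform-in-time bounds \eqref{J_0_eps}--\eqref{J_1_eps}, obtained by testing the equations against $A_0u$ and $A_1\theta$ and invoking the uniform Gronwall lemma.

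The delicate step, and the main obstacle, is (i): equation \eqref{Bous_fun_2} is driven by the ``anti-dissipative'' source $u\cdot\mathbf{e}_2$, which couples back to the velocity, so testing \eqref{Bous_fun_2} with $\theta$ and bounding $(u\cdot\mathbf{e}_2,\theta)_{H_1}$ by Cauchy--Schwarz and Poincar\'e does \emph{not} close the coupled system for arbitrary $\nu,\kappa$. The clean route is to undo the change of variables of \cite{Foias_Manley_Temam}: the original temperature $\vartheta:=\theta+(1-x_2)$ solves the \emph{source-free} convection--diffusion equation $\partial_t\vartheta+(u\cdot\nabla)\vartheta-\kappa\Delta\vartheta=0$ with boundary data $\vartheta=1$ on $x_2=0$ and $\vartheta=0$ on $x_2=1$. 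Testing this against $(\vartheta-1)^{+}$ and against $\vartheta^{-}$ (both vanish on $\partial\Omega$ since the boundary data lie in $[0,1]$, and the advection term drops since $u$ is divergence free) gives $\tfrac12\od{}{t}\norm{(\vartheta-1)^{+}}_{\LpP{2}}^2+\kappa\norm{\nabla(\vartheta-1)^{+}}_{\LpP{2}}^2=0$ and similarly for $\vartheta^{-}$; by Poincar\'e both $\norm{(\vartheta-1)^{+}}_{\LpP{2}}$ and $\norm{\vartheta^{-}}_{\LpP{2}}$ decay exponentially, so there is a $t_0>0$ past which $\norm{\vartheta(t)}_{\LpP{2}}$ is as close to $\le L^{1/2}$ as we like, and, since $|\Omega|=L$,
\begin{align*}
\norm{\theta(t)}_{H_1}=\norm{\vartheta-(1-x_2)}_{\LpP{2}}\le\norm{\vartheta}_{\LpP{2}}+\norm{1-x_2}_{\LpP{2}}\le 2L^{1/2}.
\end{align*}
(One could instead work with \eqref{Bous_fun_2} directly, exploiting that $u\cdot\mathbf{e}_2$ has vanishing horizontal mean at every height, $\int_0^L u\cdot\mathbf{e}_2\,dx_1=0$.)

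With $\norm{\theta}_{H_1}$ in hand, stage (ii) is routine: testing \eqref{Bous_fun_1} with $u$ and using \eqref{orth} gives $\tfrac12\od{}{t}\norm{u}_{H_0}^2+\nu\norm{u}_{V_0}^2=(\cP_\sigma(\theta\mathbf{e}_2),u)_{H_0}\le\norm{\theta}_{H_1}\norm{u}_{H_0}$; bounding $\norm{u}_{V_0}^2\ge\lambda_1\norm{u}_{H_0}^2$ by \eqref{poincare} and applying Gronwall with (i) yields the stated bound on $\norm{u(t)}_{H_0}$ for $t$ beyond a suitable $t_0$. For (iii), integrate the energy identities for $\norm{u}_{H_0}^2$ and $\norm{\theta}_{H_1}^2$ over $[t,t+1]$ and bound their right-hand sides using the $H_0$ and $H_1$ bounds just obtained.

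For (iv), test \eqref{Bous_fun_1} with $A_0u$ and \eqref{Bous_fun_2} with $A_1\theta$. Estimating the bilinear terms by Ladyzhenskaya \eqref{L4_to_H1} together with $\norm{\nabla\phi}_{\LpP{4}}^2\le c\norm{\phi}_{V_i}\norm{A_i\phi}_{H_i}$, and absorbing the top-order factors into $\tfrac{\nu}{2}\norm{A_0u}_{H_0}^2$ and $\tfrac{\kappa}{2}\norm{A_1\theta}_{H_1}^2$ via Young, leads to
\begin{align*}
\od{}{t}\norm{u}_{V_0}^2\le g_0\norm{u}_{V_0}^2+h_0,\qquad \od{}{t}\norm{\theta}_{V_1}^2\le g_1\norm{\theta}_{V_1}^2+h_1,
\end{align*}
with $g_0\le c\nu^{-3}\norm{u}_{H_0}^2\norm{u}_{V_0}^2$, $g_1\le c\kappa^{-3}\norm{u}_{H_0}^2\norm{u}_{V_0}^2$, and $h_0,h_1$ bounded by the $H_0$ and $H_1$ norms. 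By (i)--(iii) each of $g_0,g_1,h_0,h_1$ is uniformly integrable over unit intervals; in particular $\int_t^{t+1}g_0\,ds\le cL\,a_3/(\nu^5\lambda_1)=a_1$ and $\int_t^{t+1}g_1\,ds\le cL\,a_3/(\kappa^3\nu^2\lambda_1)=b_1$, matching the stated constants, while $h_0,h_1$ supply the $a_2,b_2$ terms. The uniform Gronwall lemma (see \cite{Temam_1997}) then yields $\norm{u(t)}_{V_0}^2\le(a_2+a_3)e^{a_1}=J_0$ and $\norm{\theta(t)}_{V_1}^2\le(b_2+b_3)e^{b_1}=J_1$ for $t\ge t_0$, which is \eqref{J_0_eps}--\eqref{J_1_eps}.
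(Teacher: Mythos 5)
The paper does not prove this proposition; it is quoted from \cite{Foias_Manley_Temam, Temam_1997}, and your reconstruction follows essentially the same classical route as those references: the maximum-principle argument for the unshifted temperature $\vartheta=\theta+(1-x_2)$ (testing with $(\vartheta-1)^{+}$ and $\vartheta^{-}$) to get the absorbing $L^2$ bound, then the cascade of energy and uniform-Gronwall estimates. Your identification of the $u\cdot\mathbf{e}_2$ coupling as the obstacle, and your forms of $g_0,g_1$, which reproduce exactly the stated $a_1$ and $b_1$, are correct; the only discrepancies are in the precise powers of $\lambda_1$ in the intermediate bounds, which the paper itself flags as an artifact of restating the $\lambda_1=1$ result of \cite{Foias_Manley_Temam} for general $\lambda_1$.
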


\begin{remark}
Recall that all quantities in \eqref{Bous_fun} are dimensionless, including the parameters $\nu$, $\kappa$, $L$ and $\lambda_1$. The proof of Proposition \ref{unif_bounds} in \cite{Foias_Manley_Temam} was done \magenta{for} the particular case \magenta{$\lambda_1=1$}. We state it for arbitrary $\lambda_1$, so that our ultimate results can show this dependence. 
\end{remark}

\bigskip
\section{Convergence Results}\label{conv}
In this section, we derive conditions under which the approximate solution $(v,\eta)$ of the data assimilation system \eqref{DA_Bous_fun} converges to the solution $(u,\theta)$ of the B\'enard convection problem \eqref{Bous_fun} as $t\rightarrow \infty$. We will prove the result for observables operators that satisfy \eqref{app_F} and \eqref{app2_F}, in functional settings, respectively. 

The idea to apply data assimilation using observations of velocity only is inspired by the
fact that solutions in the global attractor of \eqref{Bous_fun} are completely determined by
their velocity component. That is, the values of the temperature (or the density)
$\theta(t;x)$ in $\mathcal{A}$ are completely determined by the velocity vector field
$u(t;x)$ for all time in $\mathcal{A}$.

\begin{proposition}\label{u_to_theta}
Let $(u_1(t;x), \theta_1(t;x))$ and $(u_2(t;x), \theta_2(t;x))$ be two trajectories in $\mathcal{A}$ of \eqref{Bous_fun}. If $u_1(t;x)=u_2(t;x)=u(t;x)$ in $\mathcal{A}$ for all $t\in \nR$, then $\theta_1(t;x)=\theta_2(t;x)$, for all $t\in \nR$.

\begin{proof}
Let $\theta_1$ and $\theta_2$ be two trajectories such that $(u(t;x), \theta_i(t;x))$ $\in$ $\mathcal{A}$ for $i=1, 2$ for all $t\in\nR$. Define ${\tilde\theta}= \theta_1-\theta_2$. Then, $\tilde\theta$ satisfies the equation
\begin{align}\label{t_theta}
\pd{\tilde\theta}{t} + \kappa A_1\tilde\theta +\magenta{B_1}(u, \tilde{\theta}) =0.
\end{align}
Taking the ${H_1}$ inner product of \eqref{t_theta} with $\tilde\theta$ and using the Poincar\'e inequality \eqref{poincare} yields
\begin{align*}
\frac12 \od{}{t}\norm{\tilde\theta}_{{H_1}}^2 &= -\kappa\norm{\tilde\theta}_{{V_1}}^2 \leq-\kappa \lambda_1\norm{\tilde\theta}_{{H_1}}^2.
\end{align*}
Using Gronwall's lemma we have
\begin{align}\label{t_theta_norm}
\norm{\tilde\theta(t)}_{{H_1}}^2 \leq e^{-\magenta{2}\kappa\lambda_1(t-s)} \norm{\tilde\theta(s)}_{{H_1}}^2,
\end{align}
for all $-\infty<s\leq  t <\infty.$
Since the solutions on the global attractor $\mathcal{A}$ are bounded in $H\times{H_1}$ and $V\times{V_1}$, we can let $s\rightarrow -\infty$ in \eqref{t_theta_norm} to obtain
\begin{align}
\norm{\tilde\theta(t)}_{{H_1}}^2 =0,
\end{align}
for all $t\in \nR$.
\end{proof}
\end{proposition}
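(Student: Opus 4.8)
The plan is to form the difference $\tilde\theta := \theta_1-\theta_2$ and show it satisfies a homogeneous linear advection--diffusion equation driven by the common velocity $u$, then run an energy estimate \emph{backward} in time. First I would subtract equation \eqref{Bous_fun_2}, written for the pair $(u,\theta_2)$, from the same equation written for $(u,\theta_1)$. The forcing term $u\cdot\mathbf{e}_2$ is identical in both and cancels, and the bilinearity of $B_1$ in its second argument gives $B_1(u,\theta_1)-B_1(u,\theta_2)=B_1(u,\tilde\theta)$; hence $\tilde\theta$ solves
\[
\pd{\tilde\theta}{t} + \kappa A_1\tilde\theta + B_1(u,\tilde\theta)=0,
\]
with no surviving contribution from the momentum equation at all.

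Next I would take the $H_1$ inner product of this equation with $\tilde\theta$. The transport term drops out by the orthogonality property \eqref{orth}, leaving $\tfrac12\od{}{t}\norm{\tilde\theta}_{H_1}^2 = -\kappa\norm{\tilde\theta}_{V_1}^2$, and the Poincar\'e inequality \eqref{poincare} bounds the right-hand side by $-\kappa\lambda_1\norm{\tilde\theta}_{H_1}^2$. Gronwall's lemma then yields $\norm{\tilde\theta(t)}_{H_1}^2 \le e^{-2\kappa\lambda_1(t-s)}\norm{\tilde\theta(s)}_{H_1}^2$ for all $s\le t$. Because both trajectories lie on the global attractor $\mathcal{A}$, which by Theorem \ref{global_attractor_Bous} is compact in $H_0\times H_1$ (and the trajectories on it are strong solutions, bounded in $V_0\times V_1$ uniformly for $t\in\nR$), the quantity $\norm{\tilde\theta(s)}_{H_1}^2$ stays bounded as $s\to-\infty$ while $e^{-2\kappa\lambda_1(t-s)}\to 0$; letting $s\to-\infty$ forces $\norm{\tilde\theta(t)}_{H_1}=0$ for every $t\in\nR$.

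The only point requiring care is the passage $s\to-\infty$: this is precisely where invariance and boundedness of the attractor enter, and it is what upgrades a forward-in-time contraction estimate into a genuine ``determining velocity'' statement. I do not anticipate a real obstacle beyond checking that the energy identity is legitimate, which is immediate since strong solutions on $\mathcal{A}$ are regular enough ($\theta_i\in C(\nR;V_1)\cap L^2_{\mathrm{loc}}(\nR;\mathcal{D}(A_1))$) for the computation to be rigorous.
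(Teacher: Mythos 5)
Your proposal is correct and follows essentially the same route as the paper: the difference $\tilde\theta$ satisfies the same homogeneous advection--diffusion equation, the transport term is killed by the orthogonality property \eqref{orth}, Poincar\'e and Gronwall give the backward-in-time contraction, and boundedness of the attractor lets $s\to-\infty$. No gaps.
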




In functional form the system \eqref{DA_Bous} reads as
\begin{subequations}\label{DA_Bous_fun}
\begin{align}
&\od{v}{t} + \nu A_0v + \magenta{B_0}(v,v) = \cP_\sigma(\eta {\bf e}_2) - \mu \cP_\sigma(I_h(v)-I_h(u)), \label{DA_v}\\
& \od{\eta}{t} +\kappa A_1\eta+ \magenta{B_1}(v,\eta) - v\cdot{\bf e}_2= 0, \label{DA_e}\\
&v(0;x)= v_0(x), \quad \eta(0;x)= \eta_0(x),
\end{align}
\end{subequations}
where $(u,\theta)$ is the strong solution of the 2D B\'enard convection problem \eqref{Bous_fun} on the global attractor $\mathcal{A}$.

Following the techniques that were introduced to prove the existence and uniqueness of solutions for the Navier-Stokes equations and the Boussinesq equations (see for example, \cite{Cannon_DiB_1980}, \cite{Constantin_Foias_1988}, \cite{Foias_Manley_Temam}, \cite{Temam_1997} and \cite{Temam_2001_Th_Num}). We can show the existence of the solution $(v,\eta)$ of system \eqref{DA_Bous_fun} using the Galerkin method and the Aubin compactness theorem. The uniqueness and the well-posedness will follow as in the case of the two-dimensional Navier-Stokes equations and the two-dimensional Boussinesq equations using the Lions-Magenes lemma and Gronwall's lemma.

First, we will prove that under certain conditions on $\mu$, the approximate solution $(v,\eta)$ of the data assimilation system \eqref{DA_Bous_fun} converges to the solution $(u,\theta)$ of the B\'enard problem \eqref{Bous_fun} as $t\rightarrow \infty$ when the observables operators satisfy \eqref{app_F}.

\begin{theorem}\label{th_conv_1}
Let $I_h$ satisfy the approximation property \eqref{app_F} and $(u(t;x),\theta(t;x))$ be a strong solution in the global attractor of \eqref{Bous_fun}. Let $\mu>0$ be arbitrary and $h<<1$ be chosen such that $\mu c_0^2h^2\leq \nu$, then \eqref{DA_Bous_fun} has a unique {\it strong} solution $(v,\eta)$ that satisfies
\begin{subequations}\label{strong}
\begin{align}
v \in C([0,T];V_0)\cap L^2([0,T];\mathcal{D}(A_0)),\\
\eta\in C([0,T];{V_1})\cap L^2([0,T];\mathcal{D}(A_1)),
\end{align}
and
\begin{align}
\od{v}{t} \in L^2([0,T];H_0), \qquad \od{\eta}{t}\in L^2([0,T];{H_1}).
\end{align}
\end{subequations}
Moreover, the strong solution $(v,\eta)$ depends continuously on the initial data in the $V_0\times{V_1}$ norm.

If we choose $\mu>0$ large enough such that
\begin{align}\label{mu_1}
\mu\geq \frac{8}{\kappa\lambda_1} + \frac{8c_1^2a_3}{\nu}+ \frac{8c_1^4J_1b_3}{\kappa^2\lambda_1\nu},
\end{align}
and $h>0$ small enough such that $\mu c_0^2h^2\leq \nu$, where the positive constants $a_3(\nu,L)$, $b_3(\nu,\kappa,L)$, and $J_1(\nu,\kappa,L)$, are defined in Proposition \ref{unif_bounds}. Then, $\norm{u(t)-v(t)}_{{H_0}}^2 + \norm{\theta(t)-\eta(t)}_{{H_1}}^2$ $\rightarrow 0$ at an
exponential rate as $t \rightarrow \infty$.
\end{theorem}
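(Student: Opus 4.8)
The plan is to track the differences $w := u - v$ and $\rho := \theta - \eta$. Subtracting \eqref{DA_Bous_fun} from \eqref{Bous_fun}, using bilinearity of $B_0,B_1$ and $I_h(u)-I_h(v)=I_h(w)$, these satisfy
\begin{align*}
\od{w}{t} + \nu A_0 w + B_0(w,u) + B_0(v,w) &= \cP_\sigma(\rho\,{\bf e}_2) - \mu\cP_\sigma(I_h(w)), \\
\od{\rho}{t} + \kappa A_1\rho + B_1(u,\rho) + B_1(w,\eta) - w\cdot{\bf e}_2 &= 0 .
\end{align*}
For the well-posedness assertion I would run the Galerkin scheme for \eqref{DA_Bous_fun}: the $H_0\times{H_1}$ and $V_0\times V_1$ a priori estimates are those of the B\'enard system plus the new term $-\mu\cP_\sigma(I_h(v))$, which written as $-\mu v + \mu\cP_\sigma(v-I_h(v))$ and estimated by \eqref{app_F} and Young's inequality contributes at most $\tfrac\nu2\norm{v}_{V_0}^2 + \tfrac{\mu^2c_0^2h^2}{2\nu}\norm{v}_{H_0}^2$ — the hypothesis $\mu c_0^2h^2\le\nu$ is exactly what prevents this from absorbing the viscosity. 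Aubin--Lions compactness then produces a solution of the regularity \eqref{strong}, and uniqueness and continuous dependence follow from a Gr\"onwall estimate on the difference of two solutions, precisely as for the 2D Navier--Stokes and Boussinesq systems.

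For the convergence assertion — the core of the theorem — I take the $H_0$ inner product of the $w$-equation with $w$ and the ${H_1}$ inner product of the $\rho$-equation with $\rho$, cancel $\ip{B_0(v,w)}{w}=\ip{B_1(u,\rho)}{\rho}=0$ by \eqref{orth}, and add. Setting $Z := \norm{w}_{H_0}^2 + \norm{\rho}_{H_1}^2$,
\begin{align*}
\tfrac12\od{Z}{t} + \nu\norm{w}_{V_0}^2 + \kappa\norm{\rho}_{V_1}^2 + \mu\norm{w}_{H_0}^2
&= -\ip{B_0(w,u)}{w} - \ip{B_1(w,\eta)}{\rho} \\
&\quad + (\rho\,{\bf e}_2,w)_{H_0} + (w\cdot{\bf e}_2,\rho)_{H_1} + \mu\big(\cP_\sigma(w-I_h(w)),w\big)_{H_0} .
\end{align*}
Now estimate the right-hand side by H\"older, Ladyzhenskaya \eqref{L4_to_H1}, Poincar\'e \eqref{poincare} and \eqref{app_F}. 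The crucial one is the temperature nonlinearity: $\ip{B_1(w,\eta)}{\rho}=\ip{B_1(w,\theta)}{\rho}=-\ip{B_1(w,\rho)}{\theta}$ (by \eqref{orth}), which is $\le c_1\lambda_1^{-1/4}\norm{w}_{H_0}^{1/2}\norm{w}_{V_0}^{1/2}\norm{\rho}_{V_1}\norm{\theta}_{V_1}$; a $(4,2,4)$-Young step absorbs $\norm{w}_{V_0}^2$ and $\norm{\rho}_{V_1}^2$ into the dissipation and leaves only $\lesssim c_1^4(\nu\kappa^2\lambda_1)^{-1}\norm{\theta}_{V_1}^4\norm{w}_{H_0}^2$. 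Likewise $\ip{B_0(w,u)}{w}\le c_1\norm{w}_{H_0}\norm{w}_{V_0}\norm{u}_{V_0}$ leaves $\lesssim c_1^2\nu^{-1}\norm{u}_{V_0}^2\norm{w}_{H_0}^2$; the buoyancy/advection coupling gives $\le 2\norm{\rho}_{H_1}\norm{w}_{H_0}\le\tfrac\kappa4\norm{\rho}_{V_1}^2 + 4(\kappa\lambda_1)^{-1}\norm{w}_{H_0}^2$; and the nudging term is $\le\tfrac\nu2\norm{w}_{V_0}^2 + \tfrac\mu2\norm{w}_{H_0}^2$ using $\mu c_0^2h^2\le\nu$. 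Collecting, discarding the leftover $\norm{w}_{V_0}^2$-dissipation and applying \eqref{poincare} to the leftover $\norm{\rho}_{V_1}^2$-dissipation, one arrives at
\begin{align*}
&\tfrac12\od{Z}{t} + \tfrac{\kappa\lambda_1}{2}\norm{\rho}_{H_1}^2 + \big(\tfrac\mu2 - \phi(t)\big)\norm{w}_{H_0}^2 \le 0, \\
&\qquad \phi(t) := \tfrac{4}{\kappa\lambda_1} + \tfrac{2c_1^2}{\nu}\norm{u(t)}_{V_0}^2 + \tfrac{Cc_1^4}{\nu\kappa^2\lambda_1}\norm{\theta(t)}_{V_1}^4,
\end{align*}
$C$ universal. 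The point of the whole computation is that the fluctuating coefficient $\phi(t)$ multiplies only $\norm{w}_{H_0}^2$, hence can be dominated by the large nudging parameter, while $\norm{\rho}_{H_1}^2$ simply decays at the fixed viscous rate $\kappa\lambda_1$; note also $\int_t^{t+1}\norm{\theta}_{V_1}^4\,ds\le J_1\!\int_t^{t+1}\norm{\theta}_{V_1}^2\,ds\le J_1b_3$, which explains the appearance of $J_1b_3$ in \eqref{mu_1}.

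To finish, put $\alpha(t) := \min\big(\kappa\lambda_1,\ \mu - 2\phi(t)\big)$, so that $\od{Z}{t} + \alpha(t)Z\le0$, and invoke Lemma \ref{gen_gron_2} with $\tau = 1$. Here Proposition \ref{unif_bounds} is used twice: its time-averaged bounds $\int_t^{t+1}\norm{u}_{V_0}^2\le a_3$, $\int_t^{t+1}\norm{\theta}_{V_1}^4\le J_1b_3$ combined with \eqref{mu_1} make $\int_t^{t+1}(\mu-2\phi(s))\,ds$ bounded below by a positive constant, and its pointwise bounds $\norm{u}_{V_0}^2\le J_0$, $\norm{\theta}_{V_1}^2\le J_1$ make $\phi\in L^\infty(t_0,\infty)$; together these give $\liminf_{t\to\infty}\int_t^{t+1}\alpha(s)\,ds>0$ and $\limsup_{t\to\infty}\int_t^{t+1}\alpha^-(s)\,ds<\infty$, so Lemma \ref{gen_gron_2} yields $Z(t)\to0$ at an exponential rate. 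I expect the main obstacle to be the middle step: arranging the estimates of $\ip{B_1(w,\theta)}{\rho}$ and of the Boussinesq coupling so that, after the Young inequalities, the only non-dissipative term left is a coefficient times $\norm{w}_{H_0}^2$ — this is precisely what allows velocity-only nudging to work, since there is no adjustable large parameter in the $\rho$-equation, and it is what forces the particular combination of $a_3$, $b_3$ and $J_1$ in \eqref{mu_1}.
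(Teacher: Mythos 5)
Your proposal is correct and follows essentially the same route as the paper: an energy estimate for the differences $w=u-v$, $\xi=\theta-\eta$ in $H_0\times H_1$, with the bilinear terms split so that (after the orthogonality property \eqref{orth}) the only surviving nonlinear contributions are $\ip{B_0(w,u)}{w}$ and $\ip{B_1(w,\theta)}{\xi}$, Young's inequality pushing all non-dissipative remainders onto a coefficient times $\norm{w}_{H_0}^2$, the time-averaged bounds $\int_t^{t+1}\norm{u}_{V_0}^2\le a_3$ and $\int_t^{t+1}\norm{\theta}_{V_1}^4\le J_1b_3$ from Proposition \ref{unif_bounds}, and Lemma \ref{gen_gron_2} applied to $\min$ of the Poincar\'e decay rates and the nudging-dominated coefficient. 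Your alternative splitting $B_1(u,\rho)+B_1(w,\eta)$ is equivalent to the paper's $B_1(v,\xi)+B_1(w,\theta)$ once tested against $\rho$, and the remaining differences are only in inessential numerical constants.
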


\begin{proof}
The existence of the solution $(v,\eta)$ follows using the Galerkin method and the Aubin compactness theorem. We refer the reader to the detailed proof of well-posedness of the data assimilation system in \cite{Azouani_Olson_Titi}. The argument here \magenta{would be} identical and will be omitted.

Define $w = u-v$, $\xi = \theta-\eta$. Then $w$ and $\xi$ satisfy the equations
\begin{subequations}
\begin{align}
&\od{w}{t} +\nu A_0w +\magenta{B_0}(v,w)+ \magenta{B_0}(w,u) = \cP_\sigma(\xi \mathbf{e}_2)- \mu \cP_\sigma I_h(w), \label{w}\\
& \od{\xi}{t} - \kappa A_1 \xi +\magenta{B_1}(v,\xi) + \magenta{B_1}(w,\theta)- w\cdot{\bf e}_2= 0, \label{xi}\\
&w(0;x) = w_0(x): = u_0(x)-v_0(x), \\
&\xi(0;x) = \xi_0(x) := \theta_0(x) - \eta_0(x).
\end{align}
\end{subequations}
Since $\od{w}{t}$ and $\od{\xi}{t}$ are bounded in $L^2([0,T];H_0)$ and $L^2([0,T];{H_1})$, respectively, using the Lions-Magenes lemma, we can take the ${\LpP{2}}$ inner product of \eqref{w} and \eqref{xi} with $w$ and $\xi$, respectively, and obtain
\begin{align*}
&\frac 12 \od{}{t} \norm{w}_{{H_0}}^2 + \nu \norm{w}_{V_0}^2 + \left(\magenta{B_0}(w,u),w\right) = \int_{\Omega} \xi (w\cdot{\bf e}_2)\, dx - \mu (I_h(w), w), \\
&\frac1 2 \od{}{t} \norm{\xi}_{{H_1}}^2 + \kappa\norm{\xi}_{{V_1}}^2 + \left(\magenta{B_1}(w,\theta),\xi\right) = \int_{\Omega} \xi (w\cdot{\bf e}_2)\, dx.
\end{align*}
By H\"older's inequality, Young's inequality and the Poincar\'e inequality \eqref{poincare}, we have
\begin{align}\label{1}
\abs{\int_{\Omega}\xi (w\cdot{\bf e}_2)\, dx} &\leq \norm{w}_{{H_0}}\norm{\xi}_{{H_1}}\notag \\
& \leq \frac{\kappa\lambda_1}{4}\norm{\xi}_{{H_1}}^2 + \frac{1}{\kappa\lambda_1}\norm{w}_{{H_0}}^2\notag \\
&\leq \frac{\kappa}{4}\norm{\nabla\xi}_{{H_1}}^2 +  \frac{1}{\kappa\lambda_1}\norm{w}_{{H_0}}^2.
\end{align}
Ladyzhenskaya's inequality \eqref{L4_to_H1} and Young's inequality yield
\begin{align}\label{2}
& \abs{\left(\magenta{B_1}(w,\theta), \xi\right)} \leq \norm{w}_{\LpP{4}}\norm{\xi}_{\LpP{4}}\norm{\theta}_{{V_1}}\notag \\
&\quad \leq c_1 \norm{w}_{{H_0}}^{1/2}\norm{w}_{{V_0}}^{1/2}\norm{\xi}_{{H_1}}^{1/2}\norm{\xi}_{{V_1}}^{1/2} \norm{\theta}_{{V_1}}\notag \\
& \quad \leq  \frac{c_1^2}{\kappa\lambda_1^{1/2}} \norm{w}_{{H_0}}\norm{w}_{{V_0}}\norm{\theta}_{{V_1}}^2+ \frac{\kappa\lambda_1^{1/2}}{4}\norm{\xi}_{{H_1}}\norm{\xi}_{{V_1}} \notag \\
& \quad \leq \frac \nu 8 \norm{w}_{{V_0}}^2 + \frac{2c_1^4}{\kappa^2\lambda_1\nu} \norm{w}_{{H_0}}^2\norm{\theta}_{{V_1}}^4+ \frac \kappa 8 \norm{\xi}_{{V_1}}^2 + \frac{\kappa\lambda_1}{8} \norm{\xi}_{{H_1}}^2\notag \\
 &\quad \leq \frac \nu 8 \norm{w}_{{V_0}}^2 + \frac{2c_1^4}{\kappa^2\lambda_1\nu} \norm{w}_{{H_1}}^2\norm{\theta}_{{V_1}}^4 + \frac \kappa 4 \norm{\xi}_{{V_1}}^2,
\end{align}
and
\begin{align}\label{2}
\abs{\left(\magenta{B_0}(w,u),w\right)} &\leq \norm{v}_{{V_0}}\norm{w}_{\LpP{4}}^2 \notag \\
&\leq c_1\norm{u}_{{V_0}}\norm{w}_{{H_0}}\norm{w}_{{V_0}}\notag \\
& \leq  \frac \nu 8 \norm{w}_{{V_0}}^2 +\frac {2c_1^2}{\nu} \norm{u}_{{V_0}}^2\norm{w}_{{H_0}}^2.
\end{align}
Thanks to the assumption $\mu c_0^2h^2\leq \nu$ and Young's inequality, we have 
\begin{align}\label{4}
-\mu(I_h(w),w) & = -\mu(I_h(w)-w, w) - \mu \norm{w}_{{H_0}}^2\notag \\
& \leq \mu \norm{I_h(w)-w}_{{H_0}}\norm{w}_{{H_0}} - \mu \norm{w}_{{H_0}}^2\notag \\
&\leq \mu c_0h\norm{w}_{{H_0}}\norm{w}_{{V_0}} - \mu \norm{w}_{{H_0}}^2\notag \\
& \leq \frac{\mu c_0^2h^2}{2}\norm{w}_{{V_0}}^2 - \frac{\mu}{2}\norm{w}_{{H_0}}^2\notag \\
&\leq \frac{\nu}{2}\norm{w}_{{V_0}}^2 -\frac{\mu}{2}\norm{w}_{{H_0}}^2.
\end{align}

Thus, it follows from the estimates \eqref{1}--\eqref{4} that
\begin{align}
& \od{}{t} \left(\norm{w}_{{H_0}}^2 + \norm{\xi}_{{H_1}}^2\right) + \magenta{\frac\nu2}\norm{w}_{{V_0}}^2 + \magenta{\frac\kappa2}\norm{\xi}_{{V_1}}^2 \leq \notag \\
&\qquad \left(\frac{4}{\kappa\lambda_1} + \frac{4c_1^2}{\nu}\norm{u}_{{V_0}}^2 + \frac{4c_1^4}{\kappa^2\lambda_1\nu}\norm{\theta}_{{V_1}}^4 - \mu\right)\norm{w}_{{H_0}}^2.
\end{align}
Using the Poincar\'e inequality \eqref{poincare}, we get
\begin{align}\label{rhs_1}
& \od{}{t} \left(\norm{w}_{{H_0}}^2 + \norm{\xi}_{{H_1}}^2\right) + \magenta{\frac{\lambda_1}2}\min\{\nu,\kappa\}\left(\norm{w}_{{H_0}}^2 + \norm{\xi}_{{H_1}}^2\right) \leq \notag \\
&\qquad\left(\frac{4}{\kappa\lambda_1} + \frac{4c_1^2}{\nu}\norm{u}_{{V_0}}^2 + \frac{4c_1^4}{\kappa^2\lambda_1\nu}\norm{\theta}_{{V_1}}^4 - \mu\right)\norm{w}_{{H_0}}^2.
\end{align}

We denote by
$$\alpha(t) := \mu -\frac{4}{\kappa\lambda_1} - \frac{4c_1^2}{\nu}\norm{u}_{{V_0}}^2 - \frac{4c_1^4}{\kappa^2\lambda_1\nu}\norm{\theta}_{{V_1}}^4.$$
By Proposition \ref{unif_bounds}, there exists $t_0>0$ and such that for all $t\geq t_0$, 
\begin{align}\label{K_1}
& \int_t^{t+1} \norm{u(s)}_{{V_0}}^2\, ds \leq a_3,
\end{align}
and
\begin{align}\label{K_2}
\int_t^{t+1} \norm{\theta(s)}_{{V_1}}^4 \,ds &  \leq \sup_{s\in[t,t+1]}\norm{\theta(s)}_{{V_1}}^2 \int_t^{t+1} \norm{\theta(s)}_{{V_1}}^2 \,ds \leq J_1b_3,
\end{align}
where the positive constants $a_3(\nu,L)$, $b_3(\nu,\kappa,L)$, and $J_1(\nu,\kappa,L)$, are defined in Proposition \ref{unif_bounds}. 
Thus,
\begin{align*}
\limsup_{t\rightarrow\infty} \int_t^{t+1} \alpha(s)\,ds \notag
\geq  \mu -\frac{4}{\kappa\lambda_1} - \frac{4c_1^2a_3}{\nu}- \frac{4c_1^4J_1b_3}{\kappa^2\lambda_1\nu}.
\end{align*}
The assumption \eqref{mu_1}
implies that
\begin{align}\label{cond_1}
\int_t^{t+1} \alpha(s)\,ds \geq \frac\mu 2>0, \quad \text{and} \quad
\int_t^{t+1} \alpha(s)\,ds \leq \frac{3\mu} 2 < \infty.
\end{align}

The inequality \eqref{rhs_1} can be rewritten as
\begin{align*}
\od{}{t} \left(\norm{w}_{\magenta{H_0}}^2 + \norm{\xi}_{{H_1}}^2\right) + \min\{\magenta{\frac{\nu\lambda_1}{2}},\magenta{\frac{\kappa \lambda_1}{2}},\alpha(t)\}\left(\norm{w}_{\magenta{H_0}}^2 + \norm{\xi}_{{H_1}}^2\right)\leq 0.
\end{align*}
Define $\tilde{\alpha}(t): = \min\{\magenta{\frac{\nu\lambda_1}{2}},\magenta{\frac{\kappa \lambda_1}{2}},\alpha(t)\}$, then $\tilde{\alpha}(t)$ satisfies \eqref{cond_1}.
By the uniform Gronwall inequality, Lemma \ref{gen_gron_2}, it follows that
\begin{align*}
\norm{w(t)}_{{H_0}}^2 + \norm{\xi(t)}_{{H_1}}^2 \rightarrow 0,
\end{align*}
at an exponential rate, as $t\rightarrow \infty$.
\end{proof}

Here we have the analogue of Theorem \ref{th_conv_1} but for observable operators that satisfy \eqref{app2_F}.
\begin{theorem}\label{th_conv_2}
Suppose $I_h$ satisfies the approximation property \eqref{app2_F} and $(u(t;x),\theta(t;x))$ \magenta{is} a strong solution, which is contained in the global attractor, of \eqref{Bous_fun}. Let $T>0$, \magenta{$\beta_0>0$ and $\beta_1>0$} be arbitrary, but fixed, such that $\beta_0 \geq J_0$ \magenta{and $\beta_1\geq J_1$}, where $J_0$ \magenta{and $J_1$ are }defined in \eqref{J_0_eps} \magenta{and \eqref{J_1_eps}, respectively}. If $v_0 \in V_0$ and $\eta_0\in {\magenta{H_1}}$ such that
\begin{align}\label{initial_condition}
\magenta{\norm{v_0}_{V_0}^2 \leq\beta_0, \quad \text{and} \quad \norm{\eta_0}_{H_1}^2 \leq \beta_1,}
\end{align}
and $\mu$ is large enough, such that
\begin{align}\label{mu_2}
\mu\geq \frac{\magenta{96c_1^2(\beta_0+\beta_1)}}{\nu}+ \frac{8c_1^2}{\nu\lambda_1}K_3 + \frac{4c_1^2}{\kappa\lambda_1}J_1+ 4\frac{\lambda_1^2+1}{\kappa\lambda_1^2},
\end{align}
and $h>0$ be small enough such that $\mu c_0^2h^2\leq \nu$, where $K_3$ is a positive constant defined in \eqref{K_3}, then \eqref{DA_Bous_fun} has a unique global solution such that
\begin{subequations}\label{strong2}
\begin{align}
v \in C([0,T];V_0)\cap L^2([0,T];\mathcal{D}(A_0)),\\
\eta\in C([0,T];{\magenta{H_1}})\cap L^2([0,T];\magenta{V_1}),
\end{align}
and
\begin{align}
\od{v}{t} \in L^2([0,T];H_0), \qquad \od{\eta}{t}\in L^2([0,T];\magenta{V_1^{'}}).
\end{align}
\end{subequations}

Moreover, $\norm{u(t)-v(t)}_{{V_0}}^2 + \norm{\theta(t)-\eta(t)}_{{H_1}}^2$ $\rightarrow 0$, at an
exponential rate, as $t \rightarrow \infty$.
\end{theorem}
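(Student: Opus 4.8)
The plan is to run the argument of Theorem \ref{th_conv_1} one derivative higher in the velocity component: the interpolation error estimate \eqref{app2_F} involves $\norm{A_0 w}_{H_0}$, so the equation for the velocity difference must be tested against $A_0 w$ rather than against $w$. This is why $v$ is produced as a \emph{strong} (and $\eta$ only as a \emph{weak}) solution and why convergence is measured in the $V_0\times H_1$ norm. Existence of $(v,\eta)$ with the asserted regularity is obtained, exactly as in \cite{Azouani_Olson_Titi} and in the proof of Theorem \ref{th_conv_1}, through a Galerkin truncation and the Aubin compactness theorem; the only new ingredient is that the feedback term $-\mu\cP_\sigma I_h(v_n)$ is estimated against $A_0 v_n$ via \eqref{app2_F}, and the resulting $\tfrac14\mu c_0^4 h^4\norm{A_0 v_n}_{H_0}^2$ is absorbed into the viscous dissipation precisely because $\mu c_0^2 h^2\leq\nu$; uniqueness and continuous dependence on the data then follow as usual from the Lions--Magenes lemma and Gronwall. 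Before the convergence step one also carries out the $H_0$- and $V_0$-energy estimates for $v$ and the $H_1$-energy estimate for $\eta$, again using $\mu c_0^2 h^2\leq\nu$ and the fact that $\norm{u(t)}_{V_0}^2\leq J_0\leq\beta_0$, $\norm{\theta(t)}_{V_1}^2\leq J_1\leq\beta_1$ on $\mathcal A$, to show that the ball \eqref{initial_condition} (or a fixed dilate of it) is forward invariant under \eqref{DA_Bous_fun}; this yields the global-in-time solution and, more importantly, the uniform bounds on $\norm{v}_{V_0}$ and $\norm{\eta}_{H_1}$ that lie behind the $\beta_0+\beta_1$ term in \eqref{mu_2}.

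For convergence, set $w=u-v$ and $\xi=\theta-\eta$, which satisfy the difference equations \eqref{w}, \eqref{xi}. Since $u$ is strong on $\mathcal A$ and $v$ is strong by the above, $w\in C([0,T];V_0)\cap L^2([0,T];\mathcal D(A_0))$ with $\od{w}{t}\in L^2([0,T];H_0)$, while $\xi\in C([0,T];H_1)\cap L^2([0,T];V_1)$ with $\od{\xi}{t}\in L^2([0,T];V_1^{'})$, so the Lions--Magenes lemma permits taking the $H_0$-inner product of \eqref{w} with $A_0 w$ and the $H_1$-inner product of \eqref{xi} with $\xi$, which gives
\[
\tfrac12\od{}{t}\norm{w}_{V_0}^2+\nu\norm{A_0 w}_{H_0}^2+(B_0(v,w),A_0 w)+(B_0(w,u),A_0 w)=(\cP_\sigma(\xi\mathbf e_2),A_0 w)-\mu(\cP_\sigma I_h(w),A_0 w),
\]
\[
\tfrac12\od{}{t}\norm{\xi}_{H_1}^2+\kappa\norm{\xi}_{V_1}^2+(B_1(w,\theta),\xi)=(w\cdot\mathbf e_2,\xi);
\]
note that, unlike in Theorem \ref{th_conv_1}, the term $(B_0(v,w),A_0 w)$ does not vanish, since the orthogonality \eqref{orth} no longer applies.

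Now one estimates each term. From $-\mu(\cP_\sigma I_h(w),A_0 w)=-\mu(\cP_\sigma(I_h(w)-w),A_0 w)-\mu\norm{w}_{V_0}^2$ together with \eqref{app2_F} and Young's inequality, the feedback term is, because $\mu c_0^2 h^2\leq\nu$, bounded by $\tfrac34\nu\norm{A_0 w}_{H_0}^2-\tfrac\mu2\norm{w}_{V_0}^2$, so that a quarter of the dissipation $\nu\norm{A_0 w}_{H_0}^2$ remains for the nonlinear and buoyancy terms. These are handled by the two-dimensional Ladyzhenskaya and Agmon inequalities together with Young's inequality: for instance
\[
\abs{(B_0(v,w),A_0 w)}\leq c\,\norm{v}_{H_0}^{1/2}\norm{v}_{V_0}^{1/2}\norm{w}_{V_0}^{1/2}\norm{A_0 w}_{H_0}^{3/2}\leq\tfrac{\nu}{12}\norm{A_0 w}_{H_0}^2+c_\nu\norm{v}_{H_0}^2\norm{v}_{V_0}^2\norm{w}_{V_0}^2,
\]
\[
\abs{(B_0(w,u),A_0 w)}\leq c\,\norm{w}_{H_0}^{1/2}\norm{u}_{V_0}\norm{A_0 w}_{H_0}^{3/2}\leq\tfrac{\nu}{12}\norm{A_0 w}_{H_0}^2+c_\nu\norm{u}_{V_0}^4\norm{w}_{H_0}^2,
\]
where, by the uniform bounds obtained above, $\norm{v}_{H_0}$ is controlled independently of $\beta_0$ so that $c_\nu\norm{v}_{H_0}^2\norm{v}_{V_0}^2\lesssim\beta_0$, while $\sup_{t}\int_t^{t+1}\norm{u(s)}_{V_0}^4\,ds<\infty$ by Proposition \ref{unif_bounds}. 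The temperature nonlinearity is bounded, using the antisymmetry $(B_1(w,\theta),\xi)=-(B_1(w,\xi),\theta)$ and the bound $\norm{\theta}_{V_1}^2\leq J_1$, by an absorbable $\tfrac\kappa8\norm{\xi}_{V_1}^2$ plus a constant multiple of $\norm{w}_{V_0}^2$; the two buoyancy couplings are handled as in \eqref{1} (adapted to the present regularity level), routing their temperature parts into the dissipation $\kappa\norm{\xi}_{V_1}^2$ (via the Poincar\'e inequality) and their velocity parts into multiples of $\norm{w}_{V_0}^2$ that the large $\mu$ absorbs, the velocity coupling $(\cP_\sigma(\xi\mathbf e_2),A_0 w)$ also contributing a small multiple of $\norm{A_0 w}_{H_0}^2$ accommodated by the remaining viscous dissipation; this, together with the temperature nonlinearity, produces the $\tfrac1\kappa$-type terms in \eqref{mu_2}. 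Adding the two identities, multiplying by $2$, and using the Poincar\'e inequality yields
\[
\od{}{t}\bigl(\norm{w}_{V_0}^2+\norm{\xi}_{H_1}^2\bigr)+\tilde\alpha(t)\,\bigl(\norm{w}_{V_0}^2+\norm{\xi}_{H_1}^2\bigr)\leq 0,
\]
where $\tilde\alpha(t)=\min\{c\nu\lambda_1,\,c\kappa\lambda_1,\,\alpha(t)\}$ and $\alpha(t)$ equals $\mu$ minus a constant of the size of the right-hand side of \eqref{mu_2} minus finitely-time-integrable corrections controlled by Proposition \ref{unif_bounds}. Assumption \eqref{mu_2} then gives $\int_t^{t+1}\alpha(s)\,ds\geq\tfrac\mu2>0$ and $\limsup_{t\to\infty}\int_t^{t+1}\alpha^{-}(s)\,ds<\infty$ for $t$ large, so $\tilde\alpha$ satisfies the hypotheses of Lemma \ref{gen_gron_2}, and therefore $\norm{u(t)-v(t)}_{V_0}^2+\norm{\theta(t)-\eta(t)}_{H_1}^2\to 0$ at an exponential rate.

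The principal obstacle is the extra term $\tfrac14 c_0^4 h^4\norm{A_0 w}_{H_0}^2$ that \eqref{app2_F} injects into the feedback estimate: it has to be swallowed by the single copy of $\nu\norm{A_0 w}_{H_0}^2$ coming from viscosity, which is what forces the relation $\mu c_0^2 h^2\leq\nu$ between $\mu$ and $h$ and which leaves only a slender margin for the now $H^2$-level trilinear estimates. A second, structural difficulty, absent from Theorem \ref{th_conv_1}, is that the $\eta$-equation carries no nudging term, so every estimate must be arranged so that only $\norm{\xi}_{V_1}^2$ (absorbed by the temperature dissipation through Poincar\'e) and $\norm{w}_{V_0}^2$ (absorbed by the large $\mu$) appear on the right-hand side; an uncontrolled $\norm{\xi}_{H_1}^2$ there would wreck the argument. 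Finally, because the entire scheme rests on the uniform-in-time bounds for $v$ and $\eta$, establishing the forward invariance of the ball \eqref{initial_condition} is an indispensable preliminary rather than a mere technicality.
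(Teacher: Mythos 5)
Your overall strategy -- test the $w$-equation against $A_0w$ and the $\xi$-equation against $\xi$, absorb the $\tfrac14 c_0^4h^4\mu\norm{A_0w}_{H_0}^2$ term from \eqref{app2_F} into the viscous dissipation using $\mu c_0^2h^2\le\nu$, and close with a Gronwall-type argument -- is exactly the paper's. But there is a genuine gap in how you obtain the uniform bound on $\norm{v(t)}_{V_0}^2$ that controls the term $\left(B_0(v,w),A_0w\right)$. You relegate this to a ``preliminary'' forward-invariance statement proved by direct $H_0$- and $V_0$-energy estimates on $v$ itself. Such direct estimates do not deliver a $\mu$-independent ball: the nudging term contributes a source of size $\mu\norm{u}^2$ in the $H_0$ balance, so $\int_t^{t+1}\norm{v}_{V_0}^2\,ds$ grows linearly in $\mu$, and the subsequent 2D Gronwall argument for $\norm{v}_{V_0}^2$ then produces a bound that is exponential in $\mu$. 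Since condition \eqref{mu_2} requires $\mu\gtrsim(\beta_0+\beta_1)/\nu$ with $\beta_0,\beta_1$ fixed \emph{in advance} of $\mu$, a $\mu$-dependent bound on $\norm{v}_{V_0}^2$ makes the argument circular. The paper resolves this with a continuation (bootstrap) argument that you do not supply: one \emph{assumes} $\norm{v(t)}_{V_0}^2+\norm{\eta(t)}_{H_1}^2<12(\beta_0+\beta_1)$ on a maximal interval $[0,\tilde T)$, runs the convergence estimate on that interval (which is where the $96c_1^2(\beta_0+\beta_1)/\nu$ term in \eqref{mu_2} comes from), concludes that $\norm{w}_{V_0}^2+\norm{\xi}_{H_1}^2$ decays and hence $\norm{v}_{V_0}^2+\norm{\eta}_{H_1}^2\le 10(\beta_0+\beta_1)$ there, and derives a contradiction with the maximality of $\tilde T$. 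The a priori bound on $v$ is thus a \emph{consequence} of the convergence estimate, not a prerequisite for it; without this device your scheme does not close.

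Two secondary discrepancies. First, you estimate $\abs{(B_0(w,u),A_0w)}$ by $c_\nu\norm{u}_{V_0}^4\norm{w}_{H_0}^2$ and invoke time-averages plus Lemma \ref{gen_gron_2}; the paper instead bounds it by $\frac{2c_1^2}{\nu\lambda_1}\norm{A_0u}_{H_0}^2\norm{w}_{V_0}^2$ and uses the \emph{pointwise} attractor bound $\norm{A_0u(t)}_{H_0}^2\le K_3$, which is what produces the term $\frac{8c_1^2}{\nu\lambda_1}K_3$ appearing in \eqref{mu_2} and allows a plain Gronwall inequality with $\alpha(t)>0$ for all $t$ (no need for Lemma \ref{gen_gron_2}). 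Your version would yield a different, non-matching smallness condition on $\mu$. Second, the coefficient multiplying $\norm{w}_{V_0}^2$ on the right-hand side must involve only $\norm{w}_{V_0}^2$ (absorbed by $\mu$) and $\norm{\xi}_{V_1}^2$ (absorbed by $\kappa$); your sketch asserts this can be arranged but the decisive point -- that the bound on $\norm{v}_{V_0}^2$ feeding the coefficient $\frac{2c_1^2}{\nu}\norm{v}_{V_0}^2$ is the bootstrap constant $12(\beta_0+\beta_1)$ -- is precisely the step that is missing.
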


\begin{proof} Define
$w := u-v $ and $\xi := \theta-\eta$.  Since by assumption, $(u, \theta)$ is a solution which is contained in the global attractor of \eqref{Bous_fun}, \magenta{in particular}, it satisfies the global estimates in Proposition \ref{unif_bounds}, then showing the  global existence, in time, of the solution $(w(t), \xi(t))$ is equivalent to showing the global existence, in time, of the solution $(v(t), \eta(t))$ of system \eqref{DA_Bous_fun}. To be concise here, we will show the global existence of the solution $(w(t), \xi(t))$ and show that $\norm{w(t)}_{V_0}^2 + \norm{\xi}_{H_1}^2$ decays exponentially, in time, which will prove the convergence of the approximate solution $(v(t),\eta(t))$ to the exact solution $(u(t), \theta(t))$, exponentially in time.

Taking the difference between system \eqref{Bous_fun} and system \eqref{DA_Bous_fun}, we see that $w$ and $\xi$ satisfy the equations
\begin{subequations}\label{DA_Bous_fun_n_diff}
\begin{align}
&\od{w}{t} +\nu A_0w +B_0(v,w)+ B_0(w,u) = \cP_\sigma(\xi \mathbf{e}_2)- \mu \cP_\sigma I_h(w), \label{w_n}\\
& \od{\xi}{t} - \kappa A_1 \xi +B_1(v,\xi) + B_1(w,\theta)- w\cdot{\bf e}_2= 0, \label{xi_n}\\
&w(0;x) = u_0(x)-v_0(x), \\
&\xi(0;x) = \theta_0(x) - \eta_0(x).
\end{align}
\end{subequations}
Next, we will prove some formal {\it apriori} estimates that are essential in proving the global existence of solutions of system \eqref{DA_Bous_fun_n_diff}. These estimates can be justified rigorously by using the Galerkin method and the Aubin compactness theorem (see e.g. \cite{Constantin_Foias_1988}).

Since \magenta{$\norm{v_0}^2_{V_0} \leq \beta_0$ and $\norm{\eta_0}_{H_1}^2\leq\beta_1$} then by the continuity of \magenta{$\norm{v(t)}_{V_0}^2$ and $\norm{\eta(t)}_{H_1}^2$}, there exists a short time interval $[0,{\tilde{T}})$ such that
\begin{align}\label{V_bound}
\magenta{\norm{v(t)}_{V_0}^2+\norm{\eta(t)}_{H_1}^2 < 12(\beta_0+\beta_1)},
\end{align}
for all $t\in [0,{\tilde{T}})$. Assume $[0, \tilde{T})$ is the maximal interval such that \eqref{V_bound} holds. We will show, by contradiction, that $ \tilde{T} =\infty$. \magenta{Assume that $\tilde{T}<\infty$, then it is clear that 
$$\limsup_{t\rightarrow \tilde{T}^{-}} \left(\norm{v(t)}_{V_0}^2+\norm{\eta(t)}_{H_1}^2\right) = 12(\beta_0+\beta_1), $$
otherwise \eqref{V_bound} will hold beyond $\tilde{T}$.}
Taking the \magenta{$H_0$ and ${H_1}$ inner products of \eqref{w_n} and \eqref{xi_n}, respectively, }with $A_0w$ and $\xi$, respectively, and using the orthogonality property \eqref{orth}, we have
\begin{align*}
\frac 12 \od{}{t} \norm{w }_{{V _0}}^2 & + \nu \norm{A_0w}_{{H_0}}^2 +\left(B_0(v,w ),A_0w\right) +\left(B_0(w ,u ),A_0w\right) \notag \\
&\qquad \qquad \qquad = \int_{\Omega} \xi (A_0w\cdot{\bf e}_2)\, dx - \mu (I_h(w ), A_0w),
\end{align*}
and
\begin{align*}
\frac1 2 \od{}{t} \norm{\xi}_{{H_1}}^2 + \kappa\norm{\xi}_{{V _1}}^2 + \left(B_1(w ,\theta ),\xi\right) &= \int_{\Omega} \xi (w \cdot{\bf e}_2)\, dx. 
\end{align*}
Using integration by parts, H\"older's inequality, Ladyzhenskaya's inequality \eqref{L4_to_H1}, the Poincar\'e inequality \eqref{poincare} and \eqref{V_bound}, we have on the time interval $[0,{\tilde{T}})$:
\begin{align}\label{2b}
\abs{\left(B_0(v,w ),A_0w \right)}&\leq \norm{v}_{{V_0}}\norm{\nabla w }_{\LpP{4}}^2\notag\\
&\leq c_1\norm{v}_{{V_0}}\norm{w }_{{V_0}}\norm{A_0w}_{{H_0}}\notag \\
&\leq \frac{\nu}{8}\norm{A_0w}_{{H_0}}^2 + \frac{2c_1^2}{\nu}\norm{v}_{{V_0}}^2\norm{w }_{{V_0}}^2\notag\\
& \leq \frac{\nu}{8}\norm{A_0w}_{{H_0}}^2 + \frac{\magenta{24c_1^2(\beta_0+\beta_1)}}{\nu}\norm{w}_{{V_0}}^2.
\end{align}
Also, by Ladyzhenskaya's inequality \eqref{L4_to_H1} and the Poincar\'e inequality \eqref{poincare}, we have
\begin{align}\label{3b}
\abs{\left(B_0(w ,u ),A_0w\right)} &\leq \norm{w }_{\LpP{4}}\norm{\nabla u }_{\LpP{4}}\norm{A_0w}_{{H_0}}\notag\\
&\leq c_1 \norm{w}_{H_0}^{1/2}\norm{w}_{V_0}^{1/2}\norm{u }_{V_0}^{1/2}\norm{A_0 u }_{H_0}^{1/2}\norm{A_0w}_{H_0}\notag \\
& \leq \frac{c_1}{\lambda_1^{1/2}}\norm{A_0u}_{{H_0}}\norm{w }_{{V _0}}\norm{A_0w}_{{H_0}}\notag\\
& \leq \frac{\nu}{8}\norm{A_0w}_{{H_0}}^2 + \frac{2c_1^2}{\nu\lambda_1}\norm{A_0u}_{{H_0}}^2\norm{w }_{{V _0}}^2,
\end{align}
and
\begin{align}\label{4b}
\abs{\left(B_1(w ,\theta ),\xi\right)}& \leq \norm{w }_{\LpP{4}}\norm{\xi}_{\LpP{4}}\norm{\theta }_{{V _1}}\notag \\
&\leq c_1\norm{w }_{{H_0}}^{1/2}\norm{w }_{{V _0}}^{1/2}\norm{\xi}_{{H_1}}^{1/2}\norm{\xi}_{{V _1}}^{1/2}\norm{\theta }_{{V_1}}\notag\\
& \leq \frac{c_1}{\lambda_1^{1/2}}\norm{w }_{{V _0}}\norm{\xi}_{{V _1}}\norm{\theta }_{{V _1}}\notag \\
& \leq \frac{\kappa}{4}\norm{\xi}_{{V _1}}^2 + \frac{c_1^2}{\kappa\lambda_1}\norm{\theta }_{{V _1}}^2\norm{w }_{{V _0}}^2.
\end{align}
Integration by \magenta{parts}, the Cauchy-Schwarz and Young inequalities yield
\begin{align}\label{6b}
\abs{\int_{\Omega} \xi (A_0w\cdot{\bf e}_2)\, dx}
&\leq \norm{w }_{{V _0}}\norm{\xi}_{{V _1}}\notag \\
&\leq \frac{\kappa}{4}\norm{\xi}_{{V _1}}^2 + \frac{1}{\kappa}\norm{w }_{{V _0}}^2.
\end{align}
Moreover, by the Cauchy-Schwarz and Young inequalities,
\begin{align}\label{5b}
\abs{\int_{\Omega}\xi (w \cdot{\bf e}_2)\, dx}
&\leq \frac{\kappa}{4}\norm{\xi}_{{V _1}}^2 +  \frac{1}{\kappa\lambda_1^2}\norm{w }_{{V _0}}^2.
\end{align}

Using \eqref{app2_F}, Young's inequality and the assumption $\mu c_0^2h^2 \leq \nu$, we can show that
\begin{align}\label{7b}
-\mu (\mathcal{P}_\sigma I_h(w), A_0w) &= \mu (w - \mathcal{P}_\sigma I_h(w), A_0w) - \mu \norm{w}_{V_0}^2\notag \\
& \leq \frac{\mu^2}{\nu} \norm{w - I_h(w)}_{H_0}^2 + \frac{\nu}{4}\norm{A_0w}_{H_0}^2 - \mu \norm{w}_{V_0}^2 \notag \\
& \leq \frac{\nu}{2} \norm{A_0w}_{H_0}^2 - \frac{\mu}{2} \norm{w}_{V_0}^2.
\end{align}
It follows from the estimates \eqref{2b}--\eqref{7b} and the Poincar\'e inequality \eqref{poincare} that on the time interval $[0,{\tilde{T}})$:
\begin{align}\label{rhs_2}
&\od{}{t}\left(\norm{w }_{{V _0}}^2 + \norm{\xi}_{{H_1}}^2\right) +\frac{\min\left\{\nu,\kappa\right\}}{2}\left(\norm{A_0w }_{{H _0}}^2 + \norm{\xi}_{{V_1}}^2\right)\notag\\
& \quad \leq\left(\frac{\magenta{48c_1^2(\beta_0+\beta_1)}}{\nu}+ \frac{4c_1^2}{\nu\lambda_1}\norm{A_0u}_{{H_0}}^2 + \frac{2c_1^2}{\kappa\lambda_1}\norm{\theta }_{{V _1}}^2 + 2\frac{\lambda_1^2+1}{\kappa\lambda_1^2}-\mu\right)\norm{w }_{{V _0}}^2.
\end{align}
We denote by
\begin{align*}
\alpha(t) &:= \mu - \left(\frac{\magenta{48c_1^2 (\beta_0+\beta_1)}}{\nu}+ \frac{4c_1^2}{\nu\lambda_1}\norm{A_0u}_{{H_0}}^2 + \frac{2c_1^2}{\kappa\lambda_1}\norm{\theta }_{{V _1}}^2 + 2\frac{\lambda_1^2+1}{\kappa\lambda_1^2}\right).
\end{align*}

Since by assumption $(u,\theta)$ is a solution that is contained in the global attractor of \eqref{DA_Bous_fun}, by Proposition \ref{unif_bounds}, one can show that there exist positive constants $K_3= K_3(\nu,\kappa,\lambda_1,L)$ such that for all $t\geq 0$,
\begin{align}\label{K_3}
\norm{A_0u(t)}_{{H_0}}^2 \leq K_3,
\end{align}
moreover, 
\begin{align}
\norm{\theta (t)}_{{V _1}}^2&\leq J_1, 
\end{align}
for all $t>0$. Therefore, assumption \eqref{mu_2}
implies that $\alpha(t)>0$ for all $t\geq 0$.
Consequently, and \magenta{by} virtue of Gronwall's lemma, we have that $\norm{w(t)}_{{V_0}}^2 + \norm{\xi(t)}_{H_1}^2$ is finite and
\begin{align}\label{w_conv_exp}
\norm{w(t)}_{{V_0}}^2 + \norm{\xi(t)}_{H_1}^2 &\leq (\norm{w_0}_{{V_0}}^2 + \norm{\xi_0}_{H_1}^2) e^{-\int_0^t \alpha(s)\,ds}\notag \\
& \leq (\norm{w_0}_{{V_0}}^2 + \norm{\xi_0}_{H_1}^2)e^{-\left(\frac{\magenta{48c_1^2 (\beta_0+\beta_1)}}{\nu}+ \frac{4c_1^2K_3}{\nu\lambda_1} + \frac{2c_1^2J_1}{\kappa\lambda_1} + 2\frac{\lambda_1^2+1}{\kappa\lambda_1^2}\right)t},
\end{align}
for all $t \in [0,{\tilde{T}})$. \magenta{Since $\norm{w_0}_{V_0}^2\leq 4\beta_0$ and $\norm{\xi_0}_{H_1}^2\leq 4\beta_1$, then $\norm{w(t)}_{V_0}^2 +\norm{\xi(t)}_{H_1}^2\leq 4(\beta_0+\beta_1)$}, for all $t\in [0, \tilde{T})$. \magenta{Thus, $\norm{v(t)}_{V_0}^2 + \norm{\eta(t)}_{H_1}^2 \leq 10(\beta_0+\beta_1)$, for all $t\in [0,\tilde{T})$. This, in turn, will lead into a contraction since $$12(\beta_0+\beta_1) = \limsup_{t\rightarrow\tilde{T}^{-}}\left(\norm{v(t)}_{V_0}^2+\norm{\eta(t)}_{H_1}^2\right)\leq 10(\beta_0+\beta_1).$$} This contradicts the assumption that $[0, \tilde{T})$ is the maximal interval such that \eqref{V_bound} holds and proves that $\tilde{T}=\infty$.

This proves that $w(t)$ \magenta{and $\xi(t)$ exist} globally, in time, for all $t\geq 0$ and that \magenta{$\norm{w(t)}_{V_0}^2+\norm{\xi(t)}_{H_1}^2$} decays at an exponential rate. That is, $v(t)$ \magenta{and $\eta(t)$ exist} globally, in time, for all $t\geq 0$ and \magenta{$\norm{v(t)}_{V_0}^2+\norm{\eta(t)}_{H_1}^2 \leq 12(\beta_0+\beta_1)$} for all $t\geq 0$. The proof of the uniqueness of the solution $(v, \theta)$ follows \magenta{similar steps as to those above for proving the} convergence. The estimate \eqref{w_conv_exp} shows that
$$\norm{u(t)-v(t)}_{{V_0}}^2 + \norm{\theta(t)-\eta(t)}_{{H_1}}^2 \rightarrow 0, $$ at an
exponential rate, as $t \rightarrow \infty$.
\end{proof}

\bigskip
\section{discussion and final remarks}

It is typical when implementing data assimilation to choose a relaxation
parameter such as $\mu$ which is effective for the spatial resolution $h$ of the
available data.  The goal of our analysis, however, is to estimate $\mu$ in terms of
physical parameters through rigorous bounds on the solutions in the global attractor.
A sufficiently small value of $h$ is then determined in terms of $\mu$.  Thus, indirectly,
the necessary spatial resolution depends on the physical parameters, which is natural.

We mention that the basis for this reduced assimilation method using velocity alone, namely
Proposition \ref{u_to_theta}, should carry over to the three-dimensional case,
if one could prove the existence of global strong solutions of the 3D B\'enard problem.

It is natural to ask if it is possible to nudge using only temperature measurements. This
would require a temperature to velocity version of Proposition \ref{u_to_theta}, which
remains an open question. It may very well be false, in which case, we would hope to construct a counterexample.

We also plan to carry out numerical work to demonstrate the effectiveness of this approach to data assimilation.
Numerical simulations in \cite{Gesho} (see also \cite{Hayden_Olson_Titi}) have shown that, in the absence
of measurements errors, the continuous data assimilation algorithm \eqref{azouani} performs much better than analytical estimates in \cite{Azouani_Olson_Titi} would suggest. This was also noted for a different data assimilation algorithm in \cite{Olson_Titi_2003} and \cite{Olson_Titi_2008}. It is likely that the data assimilation algorithm studied in this paper will also perform much better than our \magenta{analysis} guarantees.

\bigskip

\section*{Acknowledgements}

This work was completed while the authors were visiting the Institute for Pure and Applied Mathematics (IPAM), which is supported by the National Science Foundation (NSF). The work of A. F. is supported in part by NSF grant  DMS-1418911. The work of M.S.J. is supported in part by NSF grants DMS-1008661, DMS-1109022 and  DMS-1418911. The work of  E.S.T.  is supported in part by the NSF grants  DMS-1009950, DMS-1109640 and DMS-1109645.

\bigskip


\begin{thebibliography}{10}

\begin{small}

\bibitem{Azouani_Titi}
A. Azouani and E.S. Titi, \textit{Feedback control of nonlinear dissipative systems by  finite determining   parameters - a reaction-diffusion paradigm}, \magenta{Evol. Equ. and Control Theory (EECT) 3(4), (2014), 579--594.}

\bibitem{Azouani_Olson_Titi}
A. Azouani, E. Olson and E. S. Titi, {\it Continuous data assimilation using general interpolant observables},  J. Nonlinear Sci. {\bf 24(2)} (2014), 277--304. 


\bibitem{Bessaih_Olson_Titi}
H. Bessaih, E. Olson and E.S. Titi, {\it Continuous assimilation of data with stochastic noise}, \magenta{Nonlinearity (to appear)}, arXiv:1406.1533.

\bibitem{B_L_Stuart}
D. Bl\"omker, K.J.H. Law, A.M. Stuart and K. C. Zygalakis, {\it Accuracy and
stability of the continuous-times 3DVAR filter for the Navier-Stokes
equations}, Nonlinearity, {\bf 26} (2013), pp. 2193--2219.

\bibitem{Browning_H_K}
G.L. Browning, W.D. Henshaw and H.O. Kreiss, {\it A numerical investigation of the interaction between the large scales and small scales of the two-dimensional incompressible Navier-Stokes equations}, Research Report LA-UR-98-1712, Los Alamos National Laboratory. (1998).

\bibitem{Cannon_DiB_1980}
J.R. Cannon and E. DiBenedetto, {\it The initial value problem for the Boussinesq equations with data in $L^p$}. Approximation methods for Navier-Stokes problems (Proc. Sympos., Univ. Paderborn, Paderborn, 1979), pp. 129--144, Lecture Notes in Math., {\bf 771}, Springer, Berlin, (1980).


%
\bibitem{Chae_2006}
D. Chae, {\it Global regularity for the 2D Boussinesq equations with partial viscosity terms}, Adv. Math. {\bf 203(2)} (2006), pp. 497--513.

\bibitem{C_O_T}
B. Cockburn, D.A. Jones and E.S. Titi, {\it Estimating the number of asymptotic  degrees of freedom for nonlinear dissipative systems}, Mathematics
of Computation, {\bf 66} (1997), pp. 1073--1087.

\bibitem{Constantin_Foias_1988}
P.~Constantin and C.~Foias, \emph{Navier-{S}tokes {E}quations}, Chicago
  Lectures in Mathematics, University of Chicago Press, Chicago, IL, 1988.
  \MR{972259 (90b:35190)}

\bibitem{Daley}
R. Daley, {\it Atmospheric Data Analysis}, Cambridge Atmospheric and
Space Science Series, Cambridge University Press, (1991).

\bibitem{Danchin_Paicu}
R. Danchin, M. Paicu, {\it Les th\'eor\'emes de Leray et de Fujita--Kato pour le systme de Boussinesq partiellement visqueux}, Bull. Soc. Math. France {\bf 136} (2) (2008), pp. 261--309.

\bibitem{FJKT2}
C. Foias, M. S. Jolly, R. Kravchenko and E. S. Titi , {\it A unified approach to determining forms for the 2D Navier-Stokes equations -- the general interpolants case}. Russian Mathematical Surveys (Uspekhi Mat. Nauk). vol. {\bf 69(2)} (2014), pp. 177--200.

\bibitem{Foias_Manley_Temam}
C. Foias, O. Manley and R. Temam, {\it Attractors for the B\'enard problem: existence and physical bounds on their fractal dimension}. Nonlinear Analysis, Theory, Methods $\and$ Applications, {\bf 11(8)} (1987), pp. 939--967.

\bibitem{F_M_R_T}
C. Foias, O. Manley, R. Rosa and R. Temam, {\it Navier-Stokes Equations and Turbulence}, Encyclopedia of Mathematics and Its Applications {\bf 83}, Cambridge University Press, (2001).

\bibitem{Foias_Prodi}
C. Foias and G. Prodi, {\it Sur le comportement global des solutions non-stationnaires des \'equations de {N}avier-{S}tokes en dimension {$2$}}, Rend. Sem. Mat. Univ. Padova,   {\bf 39} (1967), pp. 1--34.


\bibitem{Foias_Temam} C. Foias, R. Temam,
{\it Asymptotic numerical analysis for the Navier-Stokes equations},
Nonlinear Dynamics and Turbulence, Edit. by Barenblatt, Iooss, Joseph,
Boston: Pitman Advanced Pub. Prog., 1983.

\bibitem{Foias_Temam_2}
C. Foias and R. Temam, {\it Determination of the solutions of the Navier-Stokes
equations by a set of nodal value}s, Math. Comp., {\bf 43} (1984),
pp. 117--133.

\bibitem{Foias_Titi}
C. Foias and E.S. Titi, {\it Determining nodes, finite difference schemes and
inertial manifolds}, Nonlinearity, Vol. {\bf 4(1)},  (1991), pp. 135--153.

\bibitem{Gesho}
M. Gesho, {\it  A Numerical Study of Continuous Data Assimilation Using
Nodal Points in Space for the Two-dimensional Navier-Stokes Equations},
Masters Thesis, University of Nevada, Department of Mathematics and
Statistics, (2013).

\bibitem{Henshaw_Kreiss_Ystrom}
W. D. Henshaw and H.O. Kreiss and J. Ystr\"om
{\it Numerical experiments on the interaction between the large- and small-scale motion of the Navier-Stokes Equations},
SIAM J. Multiscale Modeling \& Simulation, {\bf 1} (2003), pp. 119--149.

\bibitem{Hmidi_Keraani}
T. Hmidi, S. Keraani, {\it On the global well-posedness of the two-dimensional Boussinesq system with a zero diffusivity}, Adv. Differential Equations, {\bf 12(4)} (2007), pp. 461--480.

\bibitem{Holst_Titi}
M.J. Holst and E.S. Titi, {\it Determining projections and functionals for
weak solutions of the Navier-Stokes equations}, Contemporary Mathematics, {\bf 204} (1997), pp. 125--138.

\bibitem{Hou_Li_2005}
T.Y. Hou, C. Li, {\it Global well-posedness of the viscous Boussinesq equations}, Discrete Contin. Dyn. Syst., {\bf 12(1)}  (2005), pp. 1--12.

\bibitem{Hu_Kukavica_Ziane}
W. Hu, I. Kukavica and M. Ziane, {\it On the regularity for the Boussinesq equations in a bounded domain}, J. Math. Phys., {\bf 54} (2013), 081507.

\bibitem{Hayden_Olson_Titi} K. Hayden, E. Olson and  E.S. Titi,
{\it Discrete data assimilation in the Lorenz
and $2D$ Navier–Stokes equations}, Physica D
{\bf 240}  (2011), pp. 1416--1425.

\bibitem{Jones_Titi}
D. A. Jones and E. S. Titi, {\it Determining finite volume elements for the 2D Navier-Stokes equations}, Physica D, {\bf 60} (1992), pp. 165--174.

\bibitem{Jones_Titi_2}
 D.A. Jones and E.S. Titi, {\it Upper bounds on the number of determining
modes, nodes and volume elements for the Navier-Stokes equations},
Indiana Univ. Math. J., {\bf 42(3)},  (1993), pp. 875--887.

\bibitem{Korn}
P. ~ Korn, {\it Data assimilation for the Navier-Stokes-$\alpha$ equations}, Physica D, {\bf 238} (2009), pp. 1957--1974.

\bibitem{Larios_Lunasin_Titi_2010}
 A. Larios, E. Lunasin, and E. S. Titi, { \it Global well-posedness for the 2d Boussinesq system without heat diffusion and with either anisotropic viscosity or inviscid Voigt-regularization}, J. Differ. Equations, {\bf 255} (2013), pp. 2636-2654.

\bibitem{Olson_Titi_2003}
E. Olson and E. S. Titi, {\it Determining modes for continuous data assimilation in 2D turbulence}, Journal of Statistical Physics,  {\bf 113} (2003), pp. 799--840.

\bibitem{Olson_Titi_2008}
E. Olson and E. S. Titi, {\it Determining modes and Grashof number in 2D turbulence}, Theoretical and Computation Fluid Dynamics, {\bf 22(5)} (2008), pp. 327--339.

\bibitem{Robinson}
J. C. Robinson, {\it Infinite-dimensional Dynamical Systems. An Introduction to Dissipative Parabolic PDEs and the Theory of Global Attractors}. Cambridge Texts in Applied Mathematics. Cambridge University Press, Cambridge, 2001. xviii+461 pp. ISBN: 0-521-63204-8

\bibitem{Temam_1997}
R. Temam, {\it Infinite Dimensional Dynamical Systems in Mechanics and Physics, 2nd edition},
Applied Mathematical Sciences, {\bf 68}, Springer-Verlag, New York, 1997. MR 1441312 (98b:58056)

\bibitem{Temam_1995_Fun_Anal}
R.~Temam, \emph{Navier-{S}tokes {E}quations and {N}onlinear {F}unctional
  {A}nalysis}, second ed., CBMS-NSF Regional Conference Series in Applied
  Mathematics, {\bf 66}, Society for Industrial and Applied Mathematics (SIAM),
  Philadelphia, PA, 1995. \MR{1318914 (96e:35136)}

\bibitem{Temam_2001_Th_Num}
R. ~ Temam, {\it Navier-Stokes Equations: Theory and Numerical Analysis}, AMS Chelsea Publishing, Providence, RI, 2001, Reprint of the 1984 edition. MR 1846644 (2002j:76001)

\end{small}

\end{thebibliography}
\end{document}